\newtheorem{theorem}{Theorem}[section]
\newtheorem{corollary}[theorem]{Corollary}
\newtheorem{lemma}[theorem]{Lemma}
\newtheorem{proposition}[theorem]{Proposition}
\newtheorem{question}[theorem]{Question}
\theoremstyle{definition}
\newtheorem{definition}[theorem]{Definition}
\theoremstyle{remark}
\newtheorem{remark}[theorem]{Remark}
\newtheorem{claim}[theorem]{Claim}
\numberwithin{equation}{section}
\newcommand{\eref}[1]{(\ref{#1})}
\newcommand{\scr}{\mathcal}
 \DeclareMathOperator{\End}{End}
 \DeclareMathOperator{\Hom}{Hom}
 \DeclareMathOperator{\Symb}{Symb}
\newcommand{\all}{\hspace{1em}\mbox{for all\ }}
\newcommand{\st}{\; | \;}
\newcommand{\CC}{\mathbb{C}}
\newcommand{\NN}{\mathbb{N}}
\newcommand{\half}{\frac{1}{2}}
\newcommand{\ip}[1]{\langle #1 \rangle}
\newcommand{\bigip}[1]{\left\langle #1 \right\rangle}
\newcommand{\slot}{\:\cdot\:}
\newcommand{\dual}{\dagger}
\newcommand{\pddiff}[3][.]{\ifthenelse{\equal{#1}{.}}
        {\frac{\partial^2 #2}{\partial #3^2}}
        {\frac{\partial^2 #1}{\partial #2 \partial #3}}}
\newcommand{\Lie}{\mathsf}
\newcommand{\lie}{\mathfrak}
\newcommand{\Univ}[2][.]{\ifthenelse{\equal{#1}{.}}{\mathcal{U}}{\mathcal{U}^{(#1)}}(\mathfrak{#2})}
 \DeclareMathOperator{\SL}{SL}
 \DeclareMathOperator{\SU}{SU}
\newcommand{\glx}{\mathfrak{gl}}
\newcommand{\slx}{\mathfrak{sl}}
\newcommand{\GTsO}[1]{
  \left( \begin{array}{cccccccccc}
  \multicolumn{2}{c}{#1} &
  \multicolumn{2}{c}{0} &
  \multicolumn{2}{c}{\cdots} &
  \multicolumn{2}{c}{0} &
  \multicolumn{2}{c}{\!\!-#1} \\
&\multicolumn{2}{c}{\;0\;}
&\multicolumn{4}{c}{\;0\,\cdots\,0\;}
&\multicolumn{2}{c}{\;0\;} \\
&&\multicolumn{2}{c}{\ddots}
&&&\multicolumn{2}{c}{\adots}\\
&&&\multicolumn{2}{c}{\;0}
&\multicolumn{2}{c}{0} \\
&&&&\multicolumn{2}{c}{0}
\end{array} \right)
}
\newcommand{\CX}{C(\scr{X})}
\newcommand{\CXS}{C(\XS)}
\newcommand{\FS}{\scr{F}_S}
\newcommand{\GF}{\scr{G}_\scrF}
\newcommand{\Kfin}{\Gamma_{\Lie{K}\mathrm{-fin}}}
\newcommand{\ES}{\scr{E}_S}
\newcommand{\LXE}[1][]{L^2(\scr{X};E_{#1})}
\newcommand{\CXE}[1][]{C(\scr{X};E_{#1})}
\newcommand{\ESXE}[1][]{\scr{E}_{S}(\scr{X};E_{#1})}
\newcommand{\LXSE}[1][]{L^2(\XS;E_{#1})}
\newcommand{\CXSE}[1][]{C(\XS;E_{#1})}
\newcommand{\CinftyXSE}[1][]{C^\infty(\XS;E_{#1})}
\newcommand{\KXSE}[1][] {\Kfin(\scr{X};E_{#1})}
\newcommand{\ipS}[1]{\ip{#1}_{\CXS}}
\newcommand{\scrK}[1][]{\mathcal{K}_{#1}}
\newcommand{\scrJ}[1][]{\mathcal{J}_{#1}}
\newcommand{\Kalpha}{\scrK[\alpha]}
\newcommand{\scrA}[1][]{\mathcal{A}_{#1}}
\newcommand{\Aalpha}{\scrA[\alpha]}
\newcommand{\scrKo}{\scrK[\emptyset]}
\newcommand{\Kp}{\scrK[p]}
\newcommand{\scrX}{\scr{X}}
\newcommand{\scrY}{\scr{Y}}
\newcommand{\scrV}{\scr{V}}
\newcommand{\scrF}{\scr{F}}
\newcommand{\XS}{\scrX_S}
\newcommand{\XT}{\scrX_T}
\newcommand{\KS}{\Lie{K}_S}
\newcommand{\KT}{\Lie{K}_T}
\newcommand{\irrep}[1]{\hat{\Lie{#1}}}
\newcommand{\Khat}{\hat{\Lie{K}}}
\newcommand{\KShat}{\hat{\Lie{K}}_S}
\newcommand{\KThat}{\hat{\Lie{K}}_T}
\newcommand{\Ki}{\Lie{K}^{(i)}}
\newcommand{\Kihat}{\hat{\Lie{K}}^{(i)}}
\newcommand{\trivS}{{\mathbbm{1}_S}}
\newcommand{\trivT}{{\mathbbm{1}_T}}
\newcommand{\PsiDOF}{\overline{\Psi^0_\scrF}}
\begin{document}

\title{Products of Longitudinal Pseudodifferential Operators on Flag Varieties}
\author{Robert Yuncken}

\maketitle

\begin{abstract}

Associated to each set $S$ of simple roots for $\SL(n,\CC)$ is an equivariant fibration $\scrX\to\XS$ of the space $\scrX$ of complete flags of $\CC^n$.  To each such fibration we associate an algebra $\scrJ[S]$ of operators on $L^2(\scrX)$ which  contains, in particular, the longitudinal pseudodifferential operators of negative order tangent to the fibres.  These form a lattice of operator ideals whose common intersection is the compact operators.  As a consequence, the product of fibrewise smoothing operators (for instance) along the fibres of two such fibrations, $\scrX\to\XS$ and $\scrX\to\XT$, is a compact operator if $S\cup T$ is the full set of simple roots. 

The construction uses noncommutative harmonic analysis, and hinges upon a representation theoretic property of subgroups of $\SU(n)$, which may be described as `essential orthogonality of subrepresentations'.

\end{abstract}

%-------------------------------------------------------------------------------------------------------

\section{Introduction}

Let $\scr{X} = \scr{X}_1 \times \scr{X}_2$ be a product of compact manifolds.  If $A_1$ and $A_2$ are longitudinal smoothing operators along the respective product fibrations, then their product  $A_1A_2$ is a smoothing operator on $X$.  More generally, if $A_1$ and $A_2$ are longitudinal pseudodifferential operators of negative order then their product, whilst not being a classical pseudodifferential operator, is a compact operator on $L^2(\scr{X})$.  In this article we extend the latter fact to a class of highly non-trivial multiply-fibred manifolds --- the complete flag varieties for $\CC^n$.  

The motivation for studying longitudinal pseudodifferential operators on flag varieties comes from the representation theory of semisimple groups, where they appear frequently.  For instance, the Kunze-Stein intertwining operators between principle series representations of $\SL(n,\CC)$ are of this form (see, {\em eg}, \cite{Knapp}).  This work originated from trying to understand the Bernstein-Gelfand-Gelfand resolution (\cite{BGG}; see also, {\em eg}, \cite{BasEast}) from the point of view of equivariant index theory.  In \cite{Bernstein-ICM}, Bernstein proposed a longitudinal Sobolev theory related to such operators.  However, as far as the present author is aware, certain desirable properties of this Sobolev theory seem to fail (see \cite[Chapter 5]{yuncken-thesis}).  In this light, the results presented here constitute a weaker analytic construction which, while far less powerful than a full Sobolev theory, is sufficient for some applications to index theory (see \cite{yuncken-sl3}).

Moreover, the main result here (Theorem \ref{meet}) applies to a broader class of operators than the longitudinal pseudodifferential operators.  This extra generality is useful in the index theoretic applications.

\bigskip

The main theorem is a consequence of a property of subgroups of $\SU(n)$, which may be paraphrased as `essential orthogonality of subrepresentations'.   Let $\pi$ be a unitary representation of a compact group $\Lie{K}$ on a Hilbert space $\scr{H}$.  If $\Lie{K}'$ is a closed subgroup of $\Lie{K}$, and $\sigma$ an irreducible representation of $\Lie{K}'$, then a vector $\xi\in\scr{H}$ is type $\sigma$ if the $\Lie{K}'$-subrepresentation of $\pi|_{\Lie{K}'}$ it generates is isomorphic to $\sigma$.

\begin{definition}
\label{essentially-orthotypical}
Two closed subgroups $\Lie{K}_1$ and $\Lie{K}_2$ of $\Lie{K}$ will be called {\em essentially orthotypical} if for any irreducible representations $\sigma_1$ of $\Lie{K}_1$ and $\sigma_2$  of $\Lie{K}_2$, and any $\epsilon>0$, there are only finitely many irreducible representations $\pi$ of $\Lie{K}$ which contain unit vectors $\xi_i$ of type $\sigma_i$  ($i=1,2$) such that $|\ip{\xi_1,\xi_2}| >\epsilon$.
\end{definition}

An equivalent formulation is that the product of the isotypical projections for $\sigma_1$ and $\sigma_2$ is compact on any unitary representation of $\Lie{K}$ with finite multiplicities.  (See Lemma \ref{asymptotic-orthogonality}.)

\begin{question}
\label{conjecture}
Is it true that $\Lie{K}_1$ and $\Lie{K}_2$ are essentially orthotypical whenever they generate $\Lie{K}$? 
\end{question}

Proposition \ref{asymptotic-orthogonality-true} confirms this for certain subgroups of $\SU(n)$.

\begin{remark}

Essential orthotypicality can be viewed as an strong version of Kazhdan's property $T$.  (Compact groups satisfy property $T$ trivially.)  If we consider $\Lie{K}_1 \cup \Lie{K}_2$ as a generating set for $\Lie{K}$, then the `almost invariant vectors' definition of property $T$ has the following consequence.  Let $\pi$ be an irreducible representation of $\Lie{K}$ on $V^\pi$. There exists $\delta>0$ such that if $|\ip{\xi_1,\xi_2}|>1-\delta$ for some unit vectors $\xi_1$ and $\xi_2$ in $V^\pi$ fixed by $\Lie{K}_1$ and $\Lie{K}_2$, respectively, then $\pi$ is the trivial representaion for $\Lie{K}$. 

On the other hand, essential orthotypicality says that {\em for any} $\epsilon>0$, the condition $|\ip{\xi_1,\xi_2}|>\epsilon$ implies that $\pi$ belongs to some finite set of irreducibles of $\Lie{K}$.

%By {\em generate} we mean algebraically generate, {\em ie} without taking topological closure.  It is %worth noting that, assuming $\Lie{K}$ is Hausdorff, an application of the Baire Category Theorem %shows that if $\Lie{K}$ is algebraically generated by $\Lie{K}_1$ and $\Lie{K}_2$, then it is {\em %boundedly generated} by them, meaning that every $k\in\Lie{K}$ can be written as a word in %elements from the two groups of length at most $l$ for some fixed $l<\infty$.

\end{remark}

\bigskip

In a different direction, the results presented here suggest obvious questions about longitudinal pseudodifferential operators on multiply foliated manifolds.  Suppose $\scrX$ is a compact manifold which admits two foliations $\scrF_1$ and $\scrF_2$ with compact leaves.  Suppose further that the tangent bundles to the foliations, $T\scrF_1$ and $T\scrF_2$, generate a distribution in $T\scrX$ which is totally non-integrable\footnote{One could weaken this assumption further by asking merely that the vector fields tangent to the two foliations generate all vector fields on $\scrX$ as a Lie algebra}.
\begin{enumerate}
\item If $A_i$ is a longitudinal smoothing operator along the leaves of $\scrF_i$ ($i=1,2$), is $A_1A_2$ a smoothing operator on $\scrX$?
\item Is $A_1A_2$ a compact operator on $\scrX$?
%\item Is there some number $n<\infty$ (depending only on $\scrX$, $\scrF_1$ and $\scrF_2$) such that given longitdinal smoothing operators $A_{i,1},\ldots,A_{i,n}$ along the leaves of $\scrF_i$, the product $\prod_{j=1}^n A_{1,j}A_{2,j}$ is smoothing?
\end{enumerate}
The answer to {(i)} is no.  We suspect the answer to {(ii)} is yes.  However, the level of generality in these questions is greater than is necessary for the representation theoretic applications we have in mind.  Furthermore, the symmetry present for flag varieties allows us to take a noncommutative harmonic analysis approach to these questions, and this allows for the wider class of operators alluded to earlier.

\section{Longitudinal pseudodifferential operators on a fibre bundle}
\label{longitudinal-PsiDO}

Let $\scrX\stackrel{q}{\to}\scrY$ be a smooth fibre bundle.  The fibration yields a foliation of $\scrX$, which we will denote by $\scrF$.  Let $E$ be a vector bundle over $\scrX$.  The set of longitudinal pseudodifferential operators of order $p$ on $E$, tangent to $\scrF$, will be denoted by $\Psi^p_\scrF(E)$.  Most of the following background on longitudinal pseudodifferential operators can be found in \cite{MS-GAFS}.

Put a Riemannian metric on $\scrX$ and Hermitian metric on $E$, so that we can define the $L^2$-sections of $E$.  
%(This space will be unaffected by the choice of metrics, up to norm-equivalence.)
The order zero longitudinal pseudodifferential operators are bounded on $L^2(\scrX;E)$.  Let $S^*\scrF$ be the cosphere bundle of the foliation.  The tangential principal symbol map
$$
  \Symb_0 : \Psi^0_\scrF(E) \to C(S^*\scrF, \End(E))
$$
extends continuously to the operator-norm closure of $\Psi^0_\scrF(E)$.  Moreover, there is a short exact sequence of $C^*$-algebras
$$\xymatrix{
 0 \ar[r] & \overline{\Psi^{-1}_\scrF}(E) \ar[r]
  &\PsiDOF(E) \ar[r]^-{\Symb_0} & C(S^*\scrF, \End(E)) \ar[r] & 0. }
$$
(The kernel $\overline{\Psi^{-1}_\scrF}(E)$ is equal to $C^*_r(\GF;E)$, the $C^*$-algebra of the foliation groupoid associated to $\scrF$, although we shall not need this here.)  In fact, $ \overline{\Psi^{-1}_\scrF}(E) =  \overline{\Psi^{-p}_\scrF}(E)$ for any $-\infty\leq-p<0$.

The ideal $\overline{\Psi^{-1}_\scrF}(E)$ is much simplified in the case where $\scrF$ comes from a fibre bundle.  One can define an inner product on continuous sections of $E$ with values in $C(\scrY)$ by $L^2$-integration along the fibres:
\begin{equation}
 \label{Hilbert-module-inner-product}
 \ip{s_1,s_2}_{C(\scrY)}(y) = \int_{q^{-1}(y)} \ip{s_1(x), s_2(x)}_x \, d\mathrm{Vol}_{q^{-1}(y)} (x),
\end{equation}
for $s_1,s_2\in\CXE$.  Thus, $\CXE$ completes to a Hilbert $C(\scrY)$-module, which we denote by $\scr{E}_\scrF(\scrX;E)$.  The following fact is certainly well-known, although we are not aware of a specific reference.  We therefore provide a brief proof.

\begin{proposition}
\label{fibration-groupoid-algebra}

The algebra $\overline{\Psi^{-\infty}_\scrF}(E)$ is isomorphic to the algebra of compact Hilbert module operators $\scr{K}(\scr{E}_\scrF(\scrX;E))$.

\end{proposition}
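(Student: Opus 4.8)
The plan is to identify $\overline{\Psi^{-\infty}_\scrF}(E)$ with $\scr{K}(\ESXE)$ by exhibiting a mutually inverse pair of maps, and in fact it is cleanest to show that fibrewise smoothing operators are \emph{exactly} the Hilbert-module operators given by smooth kernels, and then invoke the standard density of finite-rank operators in $\scr{K}$ of a Hilbert module. First I would recall that an operator $A\in\overline{\Psi^{-\infty}_\scrF}(E)$ is, by definition of longitudinal smoothing operators on a fibre bundle, given fibrewise by a smooth family of smoothing kernels; concretely there is a smooth section $k_A$ of the bundle $E\boxtimes E^*$ over the fibre product $\scrX\times_\scrY\scrX$ such that
\begin{equation*}
 (As)(x) = \int_{q^{-1}(q(x))} k_A(x,x')\, s(x')\, d\mathrm{Vol}_{q^{-1}(q(x))}(x').
\end{equation*}
From this formula it is immediate that $A$ preserves the $C(\scrY)$-module structure (it commutes with multiplication by functions pulled back from $\scrY$) and is adjointable with adjoint given by the kernel $(x,x')\mapsto k_A(x',x)^*$; hence $A$ defines an element of the adjointable operators $\scr{L}(\ESXE)$.

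Next I would show this element actually lies in $\scr{K}(\ESXE)$. The model to keep in mind is that for a trivial bundle $\scrX=\scrY\times F$ a kernel of the form $k(y,x,x') = \sum_j f_j(y)\, g_j(x)\, h_j(x')$ with finitely many terms gives a finite-rank module operator $\sum_j \theta_{f_j\otimes g_j,\, \overline{h_j}}$ in the standard notation for rank-one Hilbert-module operators. For a general smooth kernel $k_A$ on the compact manifold $\scrX\times_\scrY\scrX$, a partition of unity subordinate to a cover by bundle charts together with a Fourier/Stone--Weierstrass approximation of each local kernel by such finite sums produces a sequence of finite-rank module operators converging to $A$ in the operator norm of $\scr{L}(\ESXE)$ --- uniform convergence of kernels on the compact fibre product dominates the module operator norm. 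Therefore $\overline{\Psi^{-\infty}_\scrF}(E)\subseteq \scr{K}(\ESXE)$.

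For the reverse inclusion I would argue that every rank-one module operator $\theta_{s_1,s_2}$, $s_1,s_2\in\CinftyXSE$, is visibly a fibrewise smoothing operator: its kernel is $(x,x')\mapsto s_1(x)\otimes \overline{s_2(x')}$, which is smooth on $\scrX\times_\scrY\scrX$, so $\theta_{s_1,s_2}\in\Psi^{-\infty}_\scrF(E)$. Since such finite-rank operators (with smooth $s_i$ --- smooth sections are dense in the module) are norm-dense in $\scr{K}(\ESXE)$, and $\overline{\Psi^{-\infty}_\scrF}(E)$ is by construction norm-closed, we get $\scr{K}(\ESXE)\subseteq\overline{\Psi^{-\infty}_\scrF}(E)$. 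Combining the two inclusions gives the claimed isomorphism, and it is manifestly a $*$-isomorphism since in both directions the correspondence is kernel $\leftrightarrow$ operator and respects adjoints.

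The main obstacle is the first inclusion: making precise that a smooth fibrewise smoothing kernel can be uniformly approximated by \emph{finite} sums of elementary tensors in a way that is compatible with the non-trivial bundle structure of $\scrX\to\scrY$ and with the twisting by $E$. This is where one must be a little careful with the partition of unity and with the fact that the approximating pieces, while supported in charts, still have to assemble into honest global module operators; once phrased correctly it is routine (it is the Hilbert-module analogue of the classical statement that smoothing operators on a closed manifold are the operators with smooth Schwartz kernel, i.e.\ that $C^\infty(M\times M)\hookrightarrow\scr{K}(L^2 M)$ has dense image among smoothing operators), but it is the only step requiring genuine estimates rather than bookkeeping.
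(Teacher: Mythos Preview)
Your proposal is correct and follows essentially the same route as the paper's (brief) sketch: both reduce via a partition of unity to the local product situation, where the statement becomes the classical fact that the norm closure of smoothing operators on a closed manifold is the compacts. The only organizational difference is that the paper invokes the trivial-fibration case as a known black box and then glues, whereas you unpack that black box directly in terms of kernels and rank-one Hilbert-module operators $\theta_{s_1,s_2}$; this makes your version longer but more self-contained, and the ``main obstacle'' you flag (uniform approximation of a smooth kernel on $\scrX\times_\scrY\scrX$ by finite sums of elementary tensors, compatibly with the bundle twisting) is exactly what the paper's partition-of-unity sentence is gesturing at.
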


\begin{proof}[Proof (sketch)]

Since $\scrX$ is compact, the choice of metrics on $\scrX$ and $E$ will not affect the algebras.  If the fibration is trivial ($\scrX = \scrY \times \scrV$) and the bundle $E$ is the pullback of a bundle on the fibre $\scrV$  then the result is a bundle version of the standard fact that the completion of the smoothing operators on a compact manifold is the compact operators.  To generalize this, observe that the bundle $E\to\scrX$ is locally of the above product form.  Use a partition of unity subordinate to a finite trivializing cover of $\scrY$ to show that the two algebras of the proposition are each included in the other with bounded change in norm.

\end{proof}

\section{Semisimple groups and homogeneous spaces}

%\subsection{Preliminaries}
\label{section-LG}

We will fix the following notation throughout this paper.   Let $\Lie{K}$ be a compact semisimple Lie group, with Lie algebra $\lie{k}$.  Fix a maximal torus $\Lie{T}\subseteq \Lie{K}$, with Lie algebra $\lie{t}$.  Let $R\subset\lie{t}^\dual$ denote the root system for $\Lie{K}$, and fix a choice of simple roots $\Sigma = \{\alpha_1, \ldots, \alpha_n\}$.  Let $R^+$ be the positive roots.  Let $\Lambda_R$ and $\Lambda_W$ denote the root and weight lattices, respectively.  

We now associate to each subset $S\subseteq\Sigma$ of simple roots a reductive subgroup of $\Lie{K}$ as follows.  Let $\lie{g}=\lie{k}_\CC$ be the complexified Lie algebra, with Cartan subalgebra $\lie{h}=\lie{t}_\CC$.  Let $\ip{S}$ denote the set of roots of $\Lie{K}$ which are linear combinations of roots in $S$.  Define
$$\lie{k}_S = \lie{k} \cap \left( \lie{h} \oplus \bigoplus_{\mu\in \ip{S}} \lie{g}_\mu \right),$$
which is a block-diagonal Lie subalgebra of $\lie{k}$.  Let $\KS$ be the corresponding subgroup.  In the terminology of complex semisimple groups, this is the maximal compact subgroup of the reductive part $\Lie{M}_S\Lie{A}_S$ in the Langlands decomposition of the parabolic subgroup $\Lie{M}_S\Lie{A}_S \Lie{N}_S$ associated to $S\subseteq\Sigma$.  So, for instance, if $\Lie{K}=\SU(5)$, and $S=\{\alpha_1,\alpha_2,\alpha_4\}\subseteq\Sigma$, then
$$
 \KS = \left\{ 
 \renewcommand{\baselinestretch}{1}\small\normalsize
     \left( \begin{array}{ccc}
     &&\\ 
     \multicolumn{2}{c}{\raisebox{1.5ex}[0pt]{$A$}} &  \raisebox{1.5ex}[0pt]{$0$} \\  
     \multicolumn{2}{c}{0}&B
     \end{array} \right)
 \renewcommand{\baselinestretch}{2}\small\normalsize 
 : A\in\mathrm{U}(3), ~B\in\mathrm{U}(2), ~(\det A)(\det B) = 1 \right\}.
$$
Note that $\Lie{K}_\emptyset = \Lie{T}$.

We use $\XS$ to denote the generalized flag variety $\Lie{K}/\KS$.  The space of complete flags is $\scrX_\emptyset = \Lie{K}/T$, which we will denote simply by $\scr{X}$.  For each $S\subseteq\Sigma$, the quotient map $\scr{X}\stackrel{q_S}{\to}\XS$ defines a fibration of $\scrX$ with fibres $\Lie{K}_S/\Lie{T}$.

\section{Harmonic decompositions}

We begin with some generalities.  Let $\Lie{K}$ be a compact group, and $\Lie{H}$ a closed subgroup.  Let $U$ be a unitary representation of $\Lie{K}$ on a Hilbert space $\scr{H}$.  If $\sigma\in\irrep{H}$ is an irreducible representation of $\Lie{H}$, we let $p_\sigma$ denote the projection onto the $\sigma$-isotypical subspace of $\scr{H}$ (restricting the representation of $\Lie{K}$ to $\Lie{H}$).  This can be written explicitly as
\begin{equation}
\label{integral-formula}
 p_\sigma = \dim\sigma.\int_{\Lie{H}} \overline{\chi_\sigma(h)}U(h)\,dh,
\end{equation}
where $\chi_\sigma$ is the character of $\sigma$.  If $F\subseteq\irrep{H}$ is a collection of irreducible representations, then we put $P_F = \sum_{\sigma\in F} p_\sigma$.

\begin{lemma}
\label{commuting-projections}
Let $\Lie{H}_1$, $\Lie{H}_2$ be closed subgroups of $\Lie{K}$, and let $\sigma\in\irrep{H}_1$, $\tau\in\irrep{H}_2$.   
\begin{enumerate}
\item
  If $\Lie{H}_1$ and $\Lie{H}_2$ commute, then $p_\sigma$ and $p_\tau$ commute.  
\item
  If $\Lie{H}_1 \leq \Lie{H}_2$, then $p_\sigma$ and $p_\tau$ commute.  
\end{enumerate}
\end{lemma}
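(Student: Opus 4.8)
The plan is to argue directly from the integral formula \eref{integral-formula}, using only that a bounded operator can be moved through a Bochner integral of a norm-continuous operator-valued function and, for part (i), Fubini's theorem for the product Haar measure on $\Lie{H}_1\times\Lie{H}_2$.

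For (i), write $p_\sigma = \dim\sigma\int_{\Lie{H}_1}\overline{\chi_\sigma(h)}\,U(h)\,dh$ and $p_\tau = \dim\tau\int_{\Lie{H}_2}\overline{\chi_\tau(h')}\,U(h')\,dh'$. Forming $p_\sigma p_\tau$ and pulling $p_\tau$ inside the first integral and $U(h)$ inside the second, one gets $\dim\sigma\,\dim\tau$ times the double integral of $\overline{\chi_\sigma(h)}\,\overline{\chi_\tau(h')}\,U(h)U(h')$ against $dh\,dh'$. The hypothesis that $\Lie{H}_1$ and $\Lie{H}_2$ commute elementwise gives $U(h)U(h') = U(hh') = U(h'h) = U(h')U(h)$, so Fubini turns the same double integral into $p_\tau p_\sigma$.

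For (ii), the key observation is that $p_\tau$ commutes not merely with $U(h')$ for $h'\in\Lie{H}_2$ but, a fortiori, with $U(h)$ for every $h\in\Lie{H}_1\leq\Lie{H}_2$: the $\tau$-isotypical subspace $p_\tau\scr{H}$ and its orthogonal complement $\bigoplus_{\tau'\neq\tau}p_{\tau'}\scr{H}$ are both $\Lie{H}_2$-invariant, so $p_\tau$ commutes with the entire $\Lie{H}_2$-action, hence with the restricted $\Lie{H}_1$-action. Applying this inside \eref{integral-formula} gives $p_\tau p_\sigma = \dim\sigma\int_{\Lie{H}_1}\overline{\chi_\sigma(h)}\,p_\tau U(h)\,dh = \dim\sigma\int_{\Lie{H}_1}\overline{\chi_\sigma(h)}\,U(h)p_\tau\,dh = p_\sigma p_\tau$.

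There is essentially no obstacle here; the only points deserving a word of care are the measure-theoretic manipulations — that $p_\tau$, being bounded, passes through the Bochner integral defining $p_\sigma$, and that Fubini applies to the norm-continuous integrand in part (i), which is legitimate since $\Lie{H}_1$ and $\Lie{H}_2$ are compact.
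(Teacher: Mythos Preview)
Your proof is correct. Part (i) is essentially identical to the paper's argument: the paper phrases it as a change of variables $h_2\mapsto h_1 h_2 h_1^{-1}$ in the double integral, but when $\Lie{H}_1$ and $\Lie{H}_2$ commute elementwise this is exactly the swap $U(h_1 h_2)=U(h_2 h_1)$ plus Fubini that you carry out.

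For part (ii) you take a genuinely different (and arguably more conceptual) route. The paper handles both cases uniformly: in the double integral for $p_\sigma p_\tau$ it makes the substitution $h_2\mapsto h_1^{-1} h_2 h_1$ (valid because $h_1\in\Lie{H}_1\leq\Lie{H}_2$ and Haar measure on $\Lie{H}_2$ is conjugation-invariant), turning $U(h_1 h_2)$ into $U(h_2 h_1)$ and $\chi_\tau(h_2)$ into $\chi_\tau(h_1^{-1} h_2 h_1)=\chi_\tau(h_2)$ since characters are class functions. You instead invoke the representation-theoretic fact that the $\tau$-isotypical projection $p_\tau$ is an $\Lie{H}_2$-intertwiner, hence commutes with every $U(h)$ for $h\in\Lie{H}_1$, and push $p_\tau$ through the integral defining $p_\sigma$. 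Your argument avoids the double integral and the change of variables entirely; the paper's argument has the virtue of treating (i) and (ii) by a single computation.
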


\begin{proof}
Making the change of variables $h_2 \mapsto h_1 h_2 h_1^{-1}$ in the following integral, we get
\begin{eqnarray*}
 p_\sigma p_\tau 
  &=& \dim\sigma .\dim\tau .\int_{h_1\in\Lie{H}_1} \int_{h_2\in\Lie{H}_2} \overline{\chi_\sigma(h_1)\chi_\tau(h_2)}U(h_1 h_2)\,dh_1 dh_2 \\
  &=& \dim\sigma .\dim\tau .\int_{h_1\in\Lie{H}_1} \int_{h_2\in\Lie{H}_2} \overline{\chi_\sigma(h_1)\chi_\tau(h_1h_2h_1^{-1})}U(h_2 h_1)\,dh_1 dh_2 .
\end{eqnarray*}
In either of the cases considered, we have $\chi_\tau(h_1h_2h_1^{-1}) = \chi_\tau(h_2)$, so the latter integral equals $p_\tau p_\sigma$.
\end{proof}

Now we specialize to the case of $\Lie{K}$ being compact semisimple and $\Lie{H}=\KS$, for some $S\subseteq\Sigma$.  

Consider first the case of $\Lie{K}_\emptyset = \Lie{T}$.  The irreducible representations of $\Lie{T}$ correspond to the weights $\mu$ of $\Lie{K}$, via the exponential map.  The corresponding harmonic projection --- which we will denote by $p_\mu$ rather than the cumbersome $p_{(e^{i\mu})}$ --- is the projection onto the $\mu$-weight space of a $\Lie{K}$-representation.

More generally, for any $S\subseteq\Sigma$, the family of projections $\{p_\sigma : \sigma\in\KShat \}$ give an orthogonal decomposition of any unitary representation space of $\Lie{K}$.   We wish to slightly enlarge the class of spaces which admit such harmonic decompositions.

\begin{definition}
Any direct sum of weight spaces $H=\bigoplus_i p_{\mu_i} \scr{H}_i$ (where $\scr{H}_i$ are Hilbert spaces with unitary $\Lie{K}$-representations, and $\mu_i\in\Lambda_W$) will be referred to as a {\em harmonic $\Lie{K}$-space}.
\end{definition}

By Lemma \ref{commuting-projections}, projections $p_\sigma$ (with $\sigma\in\KShat$) and $p_\tau$ (with $\tau\in\KThat$) commute if $T \subseteq S\subseteq \Sigma$.  In particular, the weight-space projections $p_\mu$ (with $\mu\in\Lambda_W$) commute with all of the harmonic projections $p_\sigma$.  Thus, for each $S\subseteq\Sigma$, the projections $\{p_\sigma : \sigma\in\KShat \}$ define an orthogonal decomposition of any harmonic $\Lie{K}$-space.

\section{Homogeneous vector bundles}

The key example of a harmonic $K$-space is the section space of a $\Lie{K}$-homogeneous vector bundle over the flag variety $\scr{X}$.  To this end, let us fix some notation.

Firstly, when working with harmonic projections $p_\sigma$ on $L^2(\Lie{K})$, we will {\em always} take them to be defined with respect to the {\em right} regular representation of $\Lie{K}$.

If $\sigma$ is a finite dimensional representation of any group, we will always denote its representation space by $V^\sigma$.  The contragredient representation will be denoted $\sigma^\dual$, acting on the dual space $V^{\sigma\dual}$.

If $\sigma$ is a finite dimensional representation of $\KS$, let $E_\sigma = \Lie{K}\times_{\KS}V^\sigma$ denote the $\Lie{K}$-homogeneous vector bundle over $\XS$ induced from $\sigma$.  Thus, the continuous sections of $E_\sigma$ are identified with
\begin{multline}
\label{equivariance}
 C(\XS;E_\sigma) = \{ s: \Lie{K} \to V^\sigma \st \text{$s$ is continuous and}\\
     s(kh) = \sigma(h^{-1})\,s(k) \text{ for all } k\in\Lie{K}, h\in\KS \}
\end{multline}

In the case of $\Lie{K}_\emptyset=\Lie{T}$, we will use weights $\mu$ in the notation, rather than their corresponding characters $e^{i\mu}$.  Thus,
\begin{eqnarray}
  \CXE[\mu] &=& \{ s\in C(\Lie{K}) \st  s(kt) = e^{i\mu}(t^{-1})\,s(k) \text{ for all } k\in\Lie{K}, t\in\Lie{T} \} 
    \nonumber \\
    &=& p_{-\mu} C(\Lie{K}). \label{mu-equivariance}
\end{eqnarray}
Hence $\LXE[\mu]=p_{-\mu}L^2(\Lie{K})$ is a harmonic $\Lie{K}$-space.  Moreover, any $\Lie{K}$-invariant vector bundle $E$ over $\scr{X}$ decomposes equivariantly into homogeneous line bundles, so that $\LXE$ is a harmonic $\Lie{K}$-space.

If $s_1, s_2\in\CXE[\mu]$, then $\overline{s_1(k)} s_2(k)$ is constant on right $\Lie{T}$-cosets, and this defines the $C(\scrX)$-valued inner product of sections, which in turn defines the Hermitian metric on $E_\mu$.  More generally, the $C(\XS)$-valued inner product on $\CXE[\mu]$ of formula \eref{Hilbert-module-inner-product} can be written as
\begin{eqnarray*}
%\label{ipS-definition}
  \ipS{s_1,s_2}(k) &=& \int_{h\in\KS} \overline{s_1(kh)} s_2(kh) \,dh \\
    &=& (p_{\trivS} (\overline{s_1} s_2) )(k),
\end{eqnarray*}
where $\trivS$ is the trivial representation of $\KS$.  The resulting Hilbert $\CXS$-module will be denoted $\ESXE[\mu]$.  Note the extreme cases $\scr{E}_\emptyset(\scrX;E_\mu) = \CXE[\mu]$ and $\scr{E}_\Sigma(\scrX;E_\mu) = \LXE[\mu]$.

\medskip
Before moving on to the central definitions of this paper, we mention one useful technical fact.  Let us extend the above $\CXS$-valued inner product to all of $\CX$, by the formula
\begin{equation}
\label{ES-pairing}
  \ipS{f_1,f_2} = p_{\trivS} (\overline{f_1}f_2) \qquad\qquad(f_1, f_2 \in \CX).
\end{equation}
Denote the completion of $\CX$ in this inner product by $\scr{E}_S(\Lie{K})$.
For $\sigma\in\KShat$, the projection $p_\sigma$ is $\CXS$-linear.  It is also adjointable (self-adjoint) since
\begin{eqnarray*}
  \ipS{p_\sigma f_1,f_2}(g) &=& \int_{h,k\in\KS} \chi_\sigma(h) f_1(gkh) f_2(gk) \,dk\,dh \\
    &=& \int_{h,k\in\KS} \overline{\chi_\sigma(h')} f_1(gk') f_2(gk'h') \,dk'\,dh' \\
    &=& \ipS{f_1,p_\sigma f_2}(g),
\end{eqnarray*}
by making the change of variables $k'=kh$, $h'=h^{-1}$.

\begin{lemma}
\label{finite-generation}
Let $S\subseteq\Sigma$, and let $\sigma\in\KShat$.  Then $p_\sigma C(\Lie{K})$ is a finitely generated projective Hilbert $C(\XS)$-module.
\end{lemma}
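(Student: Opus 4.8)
The plan is to exhibit $p_\sigma C(\Lie{K})$ as a direct summand of a finitely generated free Hilbert $C(\XS)$-module, which is exactly what "finitely generated projective" means. First I would observe that $p_\sigma C(\Lie{K})$ is a complemented submodule of $\scr{E}_S(\Lie{K})$: indeed, by the technical fact established just before the lemma, $p_\sigma$ is an adjointable (self-adjoint) idempotent on $\scr{E}_S(\Lie{K})$ that is $C(\XS)$-linear, and a complemented summand of a f.g.\ projective module is again f.g.\ projective. So the task reduces to showing that the \emph{whole} module $\scr{E}_S(\Lie{K})$ — or at least a submodule containing $p_\sigma C(\Lie{K})$ — is finitely generated projective over $C(\XS)$.

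Second, I would identify $p_\sigma C(\Lie{K})$ with a section space of a homogeneous bundle over $\XS$, then appeal to the Serre--Swan theorem. Using the right regular representation, the Peter--Weyl decomposition of $C(\Lie{K})$ restricted to $\KS$ acting on the right shows that $p_\sigma C(\Lie{K})$ consists of functions transforming in the $\sigma$-isotypical way on the right under $\KS$. Concretely, $p_\sigma C(\Lie{K}) \cong C(\XS; \End(V^\sigma)\text{-bundle})$, or more precisely it is isomorphic, as a $C(\XS)$-module with the inner product $\ipS{\slot,\slot}$, to the continuous sections of the $\Lie{K}$-homogeneous vector bundle over $\XS = \Lie{K}/\KS$ induced from the $\KS$-representation $V^\sigma \otimes V^{\sigma\dual}$ (the extra $V^{\sigma\dual}$ factor recording the "source" leg of the matrix coefficients, which is acted on trivially by $\KS$ on the right and hence contributes a free rank-$\dim\sigma$ factor). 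One checks this identification is compatible with the $C(\XS)$-module structures and inner products via $p_{\trivS}$. Since $\XS$ is compact and $E_\sigma$ (hence this bundle) is a genuine finite-rank vector bundle, Serre--Swan gives that its continuous sections form a finitely generated projective $C(\XS)$-module.

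Alternatively, and perhaps more cleanly in keeping with the spirit of the paper, one can argue directly: choose a partition of unity subordinate to a finite trivializing cover of $\XS$, lift to $\Lie{K}$, and use it to write down finitely many elements of $p_\sigma C(\Lie{K})$ together with $C(\XS)$-valued coefficient functionals that reconstruct an arbitrary section — i.e.\ build an explicit finite frame. The frame reconstruction operator is then the required $C(\XS)$-linear idempotent realizing $p_\sigma C(\Lie{K})$ as a summand of $C(\XS)^N$. Here $N$ can be taken to be $\dim\sigma$ times the rank of the (free, since a line bundle raised to the matrix-coefficient construction trivializes locally) cover.

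The main obstacle I anticipate is bookkeeping rather than conceptual: making the identification of $p_\sigma C(\Lie{K})$ with a homogeneous bundle section space precise at the level of \emph{continuous} (rather than $L^2$ or smooth) functions, and verifying that the $C(\XS)$-valued inner product from \eref{ES-pairing} matches the Hermitian-metric inner product on that bundle under the identification. Once the identification is in place, finite generation and projectivity are immediate from Serre--Swan (or from the explicit frame), since $\XS = \Lie{K}/\KS$ is a compact manifold and the induced bundle has finite rank.
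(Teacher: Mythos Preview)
Your approach is essentially the paper's: identify $p_\sigma C(\Lie{K})$ with the continuous sections of a finite-rank homogeneous vector bundle over the compact space $\XS$, then invoke Serre--Swan. The paper carries this out by writing down the explicit map $\Phi: C(\XS; E_{\sigma^\dual}) \otimes V^\sigma \to p_\sigma \scr{E}_S(\Lie{K})$, $s\otimes v \mapsto (s(\cdot),v)$, using Peter--Weyl to see it has dense range, and checking directly that it preserves the $C(\XS)$-valued inner product up to a scalar --- which simultaneously establishes $p_\sigma \scr{E}_S(\Lie{K}) = p_\sigma C(\Lie{K})$, a point you would also need. One caution: the reduction in your first paragraph is a dead end, since $\scr{E}_S(\Lie{K})$ itself is \emph{not} finitely generated over $C(\XS)$ (its fibres are $L^2(\KS)$, which is infinite-dimensional); your hedge and your second paragraph correctly bypass this.
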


\begin{proof}

Recall that  $ \CXSE[\sigma^\dual] $ is a space of $V^{\sigma\dual}$-valued functions on $\Lie{K}$ (see Equation \eref{equivariance}).
The natural $\CXS$-valued inner product of sections $s,t\in\CXSE[\sigma^\dual]$ is given by
$$
  \ipS{s,t}(k) = \ip{s(k),t(k)}_{V^{\sigma\dual}}  \qquad\qquad (k\in\Lie{K}).
$$
We claim that there is an isomorphism of Hilbert $\CXS$-modules
\begin{eqnarray*}
   \Phi:\CXSE[\sigma^\dual] \otimes V^\sigma &\to& p_\sigma \scr{E}_S(\Lie{K}) \\
   s\otimes v \;\;\;&\mapsto& (s(\slot),v).
\end{eqnarray*}
Note that the image of $\Phi$ consists of continuous functions on $\Lie{K}$, so this will prove both that $p_\sigma \scr{E}_S(\Lie{K}) = p_\sigma C(\Lie{K})$ and that it is finitely generated projective.

We appeal to the well-known Peter-Weyl decomposition of $\LXSE[\sigma^\dual]$:
$$
  \LXSE[\sigma^\dual] \cong \bigoplus_{\pi\in\irrep{K}} V^{\pi\dual} \otimes \Hom_{\KS}(V^\sigma, V^\pi)
$$
In this picture, $\Phi$ is obtained by applying the isomorphisms
\begin{eqnarray*}
  V^{\pi\dual} \otimes \Hom_{\KS}(V^\sigma, V^\pi) \otimes V^\sigma &\to& V^{\pi\dual} \otimes p_\sigma V^\pi \\
  \eta^\dual \otimes A \otimes v  \;\;\; &\mapsto& \eta^\dual \otimes Av.
\end{eqnarray*}
Since Peter-Weyl gives $p_\sigma L^2(\Lie{K}) \cong \bigoplus_{\pi\in\irrep{K}} V^{\pi\dual} \otimes p_\sigma V^\pi$, we see that $\Phi$ is well-defined and has dense range.
It is clearly $\CXS$-linear.  Finally, given $s\otimes v$ and $t\otimes w$ in the domain of $\Phi$,
$$
  \ipS{ \Phi(s\otimes v), \Phi(t\otimes w)} (k) 
     = \int_{h\in\KS} \overline{ (s(k),\sigma(h)v)}\, (t(k),\sigma(h)  w) \, dh.
$$
But the map $V^{\sigma\dual}\otimes V^\sigma \to L^2(\KS); v^\dual\otimes v \mapsto (v^\dual , \sigma(\slot)v)$ is an isometry, up to a factor of $(\dim V^\sigma)^\half$, so the above integral is a fixed scalar multiple of
$$
  \ip{s(k)\otimes v, t(k)\otimes w} = \ipS{s\otimes v, t\otimes w}.
$$

\begin{comment}

Consider first the restriction of $\Phi$ to $\Lie{K}$-finite sections.  The space of $\Lie{K}$-finite functions from $\Lie{K}$ to $V^{\sigma\dual}$ identify with the algebraic sum
$$
 \bigoplus_{\pi\in\Khat} V^{\pi\dual} \otimes V^\pi \otimes V^{\sigma\dual} \cong \bigoplus_{\pi\in\Khat} V^{\pi\dual} \otimes \Hom_\CC( V^\sigma, V^\pi ).
$$
Equation \eref{KS-equivariance2} implies that
$
\KXSE[\sigma^\dual]  \cong \bigoplus_{\pi\in\Khat} V^{\pi\dual} \otimes \Hom_{\KS}( V^\sigma, V^\pi )
$.
If $s\in\KXSE[\sigma^\dual]$ corresponds to $\xi\otimes A$ under this identification, then 
$$\Phi: v\otimes s \mapsto \xi\otimes Av \in \bigoplus_{\pi\in\Khat} V^{\pi\dual} \otimes p_\sigma V^\pi\cong p_\sigma \Kfin(\Lie{K}).$$
Schur's Lemma shows that this is an isomorphism.  The stated intertwining property is easily checked.

Thus, the map $\Phi$ given in the lemma has dense image.   We can complete the proof, therefore, by showing that up to some scalar multiple, $\Phi$ is an isometry for the $\CXSE$-valued inner product.  For if $v\otimes s$ and $w\otimes t$ are in the domain, then
$$ \ipS{v\otimes s, w\otimes t}(k) = \ip{v,w}\, \int_{h\in\KS} \ip{s(kh),t(kh)} \,dh = \ip{s(k),t(k)} \, \ip{v,w} ,$$
while
\begin{eqnarray*} 
 \ipS{ \Phi(v\otimes s), \Phi(w\otimes t)}
  &=& \int_{h\in\KS} \overline{(s(kh),v)} (t(kh),w) \, dh \\
  &=& \int_{h\in\KS} \overline{(s(k),\sigma(h)v)} (t(k),\sigma(h)w),
\end{eqnarray*}
and the map $(s(k),\sigma(\slot)v) \mapsto (\dim V^\sigma)^{-1} s(k)\otimes v$ is an isometry, by the Peter-Weyl theorem.

\end{comment}

\end{proof}

\section{$C^*$-algebras associated to the fibrations}

\begin{definition}
Fix $S\subseteq\Sigma$. Let $H_1$ and $H_2$ be harmonic $\Lie{K}$-spaces, and let $A:H_1\to H_2$ be a bounded linear map between them.  For each $\sigma, \tau\in\hat{\Lie{K}}_S$, put $A_{\sigma\tau}=p_\sigma A p_\tau$, so that $(A_{\sigma\tau})_{\sigma,\tau\in\hat{\Lie{K}}_S}$ is the matrix of $A$ with respect to the $\KS$-harmonic decomposition.  Say $A$ is
\begin{enumerate}
\item \emph{$S$-harmonically finite} if all but finitely many matrix entries $A_{\sigma\tau}$ are zero,
\item \emph{$S$-harmonically proper} if the matrix $(A_{\sigma\tau})$ is row- and column-finite, {\em ie}, for each fixed $\sigma$ there are only finitely many $\tau$ with $A_{\sigma\tau}$ or $A_{\tau\sigma}$ nonzero.
\end{enumerate}
\end{definition}

If $H_1=H_2=H$, the set of $S$-harmonically proper operators is an algebra, and the $S$-harmonically finite operators form an ideal in that algebra.  It is natural to close these in operator-norm to obtain a $C^*$-algebra and ideal.

\begin{definition}

For any $S\subseteq\Sigma$, let $\scrA[S](H_1,H_2)$ (respectively, $\scrK[S](H_1,H_2)$) denote the operator-norm closure of the $S$-harmonically proper operators (respectively $S$-harmonically finite operators) from $H_1$ to $H_2$.   If $H_1=H_2=H$, we will write $\scrA[S](H)$ for $\scrA[S](H,H)$ and $\scrK[S](H)$ for $\scrK[S](H,H)$.

\end{definition}

It is notationally convenient to think of $\scrA[S]$ and $\scrK[S]$ as $C^*$-categories, whose objects are harmonic $\Lie{K}$-spaces and whose morphism sets are given by the definition above.  However, it is worth remarking that we shall need none of the technicalities of $C^*$-categories.  This simply allows us to write $A\in\scrA[S]$ or $A\in\scrK[S]$, with the domain and target spaces implied by the definition of $A$.

\medskip

Fix $S\subseteq\Sigma$.  Let us fix an enumeration of the irreducible representations of $\Lie{K}_S$ as $\{\sigma_0, \sigma_1,\sigma_2,\ldots\}$, with $\sigma_0$ being the trivial representation.  Let $F_j=\{\sigma_i \st 0\leq i \leq j\}\subseteq\KShat$.  Recall that $P_{F_j}$ denotes the projection $\sum_{\sigma\in F_j}p_{\sigma}$.  

\begin{lemma}
\label{KS-equivalent-defns}
 
Let $K:H_1 \to H_2$ be a
bounded linear map between harmonic $\Lie{K}$-spaces.  The following are equivalent:
\begin{enumerate}

\item $K\in\scrK[S]$,

\item $P_{F_j}^\perp K \to 0$ and $K P_{F_j}^\perp \to 0$ in
norm as $j\to\infty$,

\item $P_{F_j} K P_{F_j} \to K$ in norm as $j\to\infty$.

\end{enumerate}

\end{lemma}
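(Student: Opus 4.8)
The strategy is to establish the cycle of implications (iii) $\Rightarrow$ (ii) $\Rightarrow$ (i) $\Rightarrow$ (iii), using the fact that the $P_{F_j}$ are an increasing sequence of projections commuting with everything relevant, together with the definition of $\scrK[S]$ as a norm closure of the $S$-harmonically finite operators. First I would record the elementary observations that $P_{F_j}$ is an orthogonal projection on any harmonic $\Lie{K}$-space (a finite sum of the mutually orthogonal $p_\sigma$), that $P_{F_j} \nearrow I$ strongly as $j\to\infty$, and that $P_{F_j} P_{F_k} = P_{F_{\min(j,k)}}$; these are immediate from the harmonic decomposition established in Section 4. I would also note that an $S$-harmonically finite operator $K_0$ satisfies $P_{F_j} K_0 P_{F_j} = K_0$ for all sufficiently large $j$, since only finitely many matrix entries $p_\sigma K_0 p_\tau$ are nonzero and so all of them are captured once $F_j$ contains the finitely many relevant $\sigma$'s and $\tau$'s.

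For (iii) $\Rightarrow$ (i): if $P_{F_j} K P_{F_j} \to K$ in norm, then $K$ is a norm-limit of the operators $P_{F_j} K P_{F_j}$, each of which is $S$-harmonically finite (its matrix is supported on $F_j\times F_j$), hence $K\in\scrK[S]$ by definition.

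For (i) $\Rightarrow$ (ii): it suffices to prove (ii) for $S$-harmonically finite $K_0$ and then pass to the norm closure. For such $K_0$, choose $N$ with $P_{F_j} K_0 P_{F_j} = K_0$ for $j\geq N$; then for $j\geq N$ we get $P_{F_j}^\perp K_0 = P_{F_j}^\perp P_{F_j} K_0 P_{F_j} = 0$ and likewise $K_0 P_{F_j}^\perp = 0$. So (ii) holds trivially (the tails vanish). Now, if $K\in\scrK[S]$ is arbitrary and $K_0$ is $S$-harmonically finite with $\|K-K_0\| < \epsilon$, then $\|P_{F_j}^\perp K\| \leq \|P_{F_j}^\perp(K-K_0)\| + \|P_{F_j}^\perp K_0\| \leq \epsilon + 0$ for $j\geq N$, since $\|P_{F_j}^\perp\|\leq 1$; and symmetrically for $K P_{F_j}^\perp$. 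Hence $\|P_{F_j}^\perp K\|\to 0$ and $\|K P_{F_j}^\perp\|\to 0$.

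For (ii) $\Rightarrow$ (iii): write $K - P_{F_j} K P_{F_j} = P_{F_j}^\perp K + P_{F_j} K P_{F_j}^\perp$, using $P_{F_j} + P_{F_j}^\perp = I$ twice. By the triangle inequality and $\|P_{F_j}\|\leq 1$, $\|K - P_{F_j} K P_{F_j}\| \leq \|P_{F_j}^\perp K\| + \|K P_{F_j}^\perp\| \to 0$ by hypothesis (ii). This completes the cycle.

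**Main obstacle.** There is no serious obstacle here; the lemma is essentially bookkeeping around an increasing net of projections, and the only mild subtlety is the standard $\epsilon/$approximation argument in (i) $\Rightarrow$ (ii), where one must be a little careful that $K_0$ can be taken $S$-harmonically finite (not merely in $\scrK[S]$) — but that is exactly what it means for $K$ to lie in the norm closure of the $S$-harmonically finite operators. I would present the three implications in the order above, keeping each to a couple of lines.
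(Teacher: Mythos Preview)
Your argument is correct and follows essentially the same route as the paper's proof: the paper proves (i)$\Rightarrow$(ii) by density of $S$-harmonically finite operators, and dismisses (ii)$\Rightarrow$(iii) and (iii)$\Rightarrow$(i) as straightforward --- exactly the content you have spelled out. One small presentational slip: your opening sentence announces the cycle ``(iii) $\Rightarrow$ (ii) $\Rightarrow$ (i) $\Rightarrow$ (iii)'', but the implications you actually prove are (iii)$\Rightarrow$(i), (i)$\Rightarrow$(ii), (ii)$\Rightarrow$(iii); fix the announced order to match.
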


\begin{proof}

For {\em(i)$\Rightarrow$(ii)},  note that {\em(ii)} is immediate if $K$ is $S$-harmonically finite, and hence holds for all $K\in\scrK[S]$ by density.  The
implications {\em(ii)}$\Rightarrow${\em(iii)} and
{\em(iii)}$\Rightarrow${\em(i)} are straightforward.

\end{proof}

\begin{lemma}
\label{AS-equivalent-defns}
For a bounded linear map $A:H_1\to H_2$ between harmonic $\Lie{K}$-spaces, the following are equivalent:
\begin{enumerate}

\item $A\in \scrA[S]$,

\item For any $k\in\NN$, $P^\perp_{F_j} A P_{F_k} \to 0$ and $P_{F_k} A P^\perp_{F_j} \to 0$ in
norm as $j\to\infty$,

\item $A$ is a two-sided multiplier of $\scrK[S]$, ie, $AK\in\scrK[S]$ for all right-composable $K\in\scrK[S]$ and $KA\in\scrK[S]$ for all left-composable $K\in\scrK[S]$.

\end{enumerate}

\end{lemma}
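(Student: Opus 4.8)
The plan is to prove (i)$\Rightarrow$(ii), then (ii)$\Leftrightarrow$(iii), and finally (ii)$\Rightarrow$(i); only the last of these requires a genuine argument, the rest being formal consequences of the definitions together with Lemma~\ref{KS-equivalent-defns}. For \emph{(i)$\Rightarrow$(ii)}: if $A$ is $S$-harmonically proper then, for each fixed $k$, both $AP_{F_k}$ and $P_{F_k}A$ are $S$-harmonically finite (by column-, resp.\ row-finiteness of the matrix $(A_{\sigma\tau})$), so $AP_{F_k}=P_{F_m}AP_{F_k}$ and $P_{F_k}A=P_{F_k}AP_{F_m}$ for some $m=m(k)$; hence $P^\perp_{F_j}AP_{F_k}=0=P_{F_k}AP^\perp_{F_j}$ once $j\geq m$. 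For a general $A\in\scrA[S]$ I would approximate by an $S$-harmonically proper $A'$ with $\|A-A'\|<\epsilon$ and use $\|P^\perp_{F_j}(A-A')P_{F_k}\|\leq\|A-A'\|$ (and its mirror image) to get $\limsup_j\|P^\perp_{F_j}AP_{F_k}\|\leq\epsilon$ and $\limsup_j\|P_{F_k}AP^\perp_{F_j}\|\leq\epsilon$ for every $\epsilon>0$.

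For \emph{(ii)$\Leftrightarrow$(iii)}: since each $P_{F_k}$ is $S$-harmonically finite it lies in $\scrK[S]$, so if $A$ is a two-sided multiplier then $AP_{F_k},P_{F_k}A\in\scrK[S]$, and feeding these two operators into the equivalence (i)$\Leftrightarrow$(ii) of Lemma~\ref{KS-equivalent-defns} yields precisely the two limits in (ii). Conversely, assuming (ii) and taking a right-composable $K\in\scrK[S]$, I would verify the conditions of Lemma~\ref{KS-equivalent-defns}(ii) for $AK$: one has $\|(AK)P^\perp_{F_j}\|=\|A(KP^\perp_{F_j})\|\leq\|A\|\,\|KP^\perp_{F_j}\|\to0$, while in $P^\perp_{F_j}AK=(P^\perp_{F_j}AP_{F_k})K+(P^\perp_{F_j}AP^\perp_{F_k})K$ one first picks $k$ with $\|A\|\,\|P^\perp_{F_k}K\|<\epsilon$ (possible as $K\in\scrK[S]$) and then lets $j\to\infty$ in the first term using (ii), giving $\limsup_j\|P^\perp_{F_j}AK\|\leq\epsilon$; the left-composable case is symmetric, splitting $(KA)P^\perp_{F_j}$ through $P_{F_k}$ on the left. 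Hence $AK,KA\in\scrK[S]$, so (iii) holds.

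The main obstacle is \emph{(ii)$\Rightarrow$(i)}, which I would prove by a lacunary ``staircase'' truncation. Fix $\epsilon>0$ and, using (ii) repeatedly, choose $0=m_0<m_1<m_2<\cdots$ with $\|P^\perp_{F_{m_{l+1}}}AP_{F_{m_l}}\|<\epsilon\,2^{-l-2}$ and $\|P_{F_{m_l}}AP^\perp_{F_{m_{l+1}}}\|<\epsilon\,2^{-l-2}$ for all $l\geq 0$. Let $Q_0=P_{F_{m_0}}$ and $Q_l=P_{F_{m_l}}-P_{F_{m_{l-1}}}$ for $l\geq1$: these are mutually orthogonal projections, each a finite sum of $p_\sigma$'s, with $\sum_l Q_l=1$ strongly. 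Set $B=\sum_{|l-l'|\leq1}Q_lAQ_{l'}$, the block-tridiagonal truncation of $A$ relative to this exhaustion; $B$ is $S$-harmonically proper, because for $p_\sigma\leq Q_l$ only the blocks $p_\sigma B p_\tau$ with $p_\tau\leq Q_{l-1}+Q_l+Q_{l+1}$ (a finite range of $\tau$) can be nonzero, and symmetrically for columns. Finally $A-B=R_1+R_2$ with $R_1=\sum_l Q_lAP^\perp_{F_{m_{l+1}}}$ (blocks above the band) and $R_2=\sum_l P^\perp_{F_{m_{l+1}}}AQ_l$ (blocks below it); the summands of $R_1$ have pairwise orthogonal ranges, so $\|R_1\|^2\leq\sum_l\|Q_lAP^\perp_{F_{m_{l+1}}}\|^2\leq\sum_l\|P_{F_{m_l}}AP^\perp_{F_{m_{l+1}}}\|^2<\epsilon^2$, and $\|R_2\|$ obeys the same bound by passing to adjoints (its summands have pairwise orthogonal initial spaces). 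Thus $\|A-B\|<2\epsilon$, and since $\epsilon$ was arbitrary, $A\in\scrA[S]$. I expect this staircase construction — and in particular the observation that the off-band part of a band truncation against a sufficiently lacunary exhaustion is norm-small precisely because of the orthogonality of the ranges and initial spaces of its pieces — to be the one non-routine step; everything else is formal bookkeeping on top of Lemma~\ref{KS-equivalent-defns}.
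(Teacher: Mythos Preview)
Your proof is correct. The implications (i)$\Rightarrow$(ii) and (ii)$\Leftrightarrow$(iii) are handled essentially as in the paper (the paper only writes (ii)$\Rightarrow$(iii), and only for $S$-harmonically finite $K$, extending by density; your version is slightly more explicit but equivalent). The substantive difference is in the hard direction.

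The paper proves (iii)$\Rightarrow$(i) by an \emph{iterative subtraction}: starting from $B_0=A$, at stage $k$ it uses the multiplier property to find $a_k$ so that the off-band pieces $C_k=P^\perp_{F_{a_k}}B_kP_{F_k}$ and $D_k=P_{F_k}B_kP^\perp_{F_{a_k}}$ are small, subtracts them, and checks that the resulting Cauchy sequence $(B_k)$ has an $S$-harmonically proper limit within $\epsilon$ of $A$. You instead prove (ii)$\Rightarrow$(i) by a \emph{one-shot block-tridiagonal truncation} against a lacunary exhaustion $(F_{m_l})$, and control the discarded off-band part $R_1+R_2$ via the orthogonality of the ranges (respectively initial spaces) of its summands, giving the clean bound $\|R_i\|^2\le\sum_l\|{\cdot}\|^2$. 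Both arguments are standard ``approximate a matrix by a banded matrix'' maneuvers; yours is a bit more direct and makes the norm estimate transparent, while the paper's iterative scheme avoids having to identify the truncation globally and checks properness row by row. Neither approach generalizes or simplifies the other in any essential way---they are two idioms for the same phenomenon.
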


\begin{remark}  
Here, {\em left-} and {\em right-composable} mean that the appropriate domain and target spaces agree.
\end{remark}

\begin{proof}

{\em(i)$\Rightarrow$(ii)}:  If $A$ is $S$-harmonically proper then {\em(ii)} is
immediate, so by density, {\em (ii)} holds for all $A\in\scr{A}_i$.

{\em(ii)$\Rightarrow$(iii)}:  Suppose $A$ satisfies {\em(ii)}.  If $K$ is $S$-harmonically finite and left-composable with $A$ then $KA$ satisfies {\em (ii)} of
Lemma \ref{KS-equivalent-defns}, so $KA\in\scrK[S]$.  Similarly, $AK\in\scrK[S]$ for right-composable  $S$-harmonically finite $K$.  Thus, {\em(iii)} follows by the density of $S$-harmonically finite operators in $\scrK[S]$.

{\em(iii)$\Rightarrow$(i)}:  Let $A$ be a multiplier of $\scrK[S]$.  Let $\epsilon>0$.  Starting with $B_0=A$, we
will construct a sequence $(B_k)$ of multipliers of $\scrK[S]$ such that
\begin{equation}
\label{small-difference}
  \|B_{k+1}-B_{k}\|<\epsilon.2^{-k-1},
\end{equation}
as well as a strictly increasing sequence $a_0, a_1, a_2,\ldots \in \NN$ such
that
\begin{equation}
\label{partially-proper-1}
 P_{F_{a_j}}^\perp B_k P_{F_j} =0 \qquad \text{for all }0\leq j < k
\end{equation}
and
\begin{equation}
\label{partially-proper-2}
  P_{F_j} B_k P_{F_{a_j}}^\perp=0 \qquad \text{for all }0\leq j < k.
\end{equation}
The norm-limit of these $B_k$ will be within $\epsilon$ of $A$ (by \eref{small-difference}) and will be $S$-harmonically proper (by \eref{partially-proper-1} and \eref{partially-proper-2}).

Suppose, then, that we have defined $B_{k}$.  Both $B_{k} P_{F_k}$ and
$P_{F_k} B_{k}$ are in $\scrK[S]$ by assumption, so by Lemma
\ref{KS-equivalent-defns} there is an integer $a_k$ (without loss of generality, larger than
$a_{k-1}$) such that the operators
$$C_k=P_{F_{a_k}}^\perp B_{k} P_{F_k}$$
and
$$D_k=P_{F_k} B_{k}  P_{F_{a_k}}^\perp,$$
have norm less than $\epsilon.2^{-k-2}$.  Now put
$$B_{k+1} = B_{k} - C_k - D_k.$$
It is clear that \eref{small-difference} is satisfied.  Since all isotypical projections for $\KS$ commute, 
\eref{partially-proper-1} and \eref{partially-proper-2} hold for $B_{k+1}$ with  $0\leq j < k$.  Finally, (noting that $a_k\geq k$)
\begin{equation*}
  P_{F_{a_k}}^\perp B_{k+1} P_{F_k} 
 %     P_{F_{a_k}}^\perp B_{k} P_{F_k} 
 %    - P_{F_{a_k}}^\perp P_{F_{a_k}}^\perp B_{k} P_{F_k} P_{F_k} 
 %    - P_{F_{a_k}}^\perp P_{F_k} B_{k}  P_{F_{a_k}}^\perp P_{F_k}   \\
     = P_{F_{a_k}}^\perp B_{k} P_{F_k} -P_{F_{a_k}}^\perp B_{k} P_{F_k} -0 \\
     =0,
\end{equation*}
and $ P_{F_{a_k}}^\perp B_{k+1} P_{F_k}=0$ similarly.

\end{proof}

\begin{lemma}
\label{projections-in-AS}

Let $S,T\subseteq\Sigma$, and suppose $S\subseteq T$ or $S\supseteq T$.  Then $p_\sigma\in\scrA[T]$ for any $\sigma\in\KShat$.

\end{lemma}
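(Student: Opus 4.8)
The plan is to reduce everything to the commutation statement in Lemma~\ref{commuting-projections} via the second (or third) characterization in Lemma~\ref{AS-equivalent-defns}, so that no serious computation is needed. Throughout, fix the enumeration $\{\sigma_0,\sigma_1,\ldots\}$ of $\KThat$ and the finite sets $F_j=\{\sigma_i : 0\leq i\leq j\}$ defining the cut-off projections $P_{F_j}$ appearing in those lemmas.

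First I would record that $p_\sigma$, being (the restriction of) the $\sigma$-isotypical projection for $\KS$ given by the integral formula \eref{integral-formula}, is a bounded self-adjoint projection on any harmonic $\Lie{K}$-space $H$ it acts on, so that the assertion $p_\sigma\in\scrA[T](H)$ makes sense. The key point is that $p_\sigma$ commutes with the isotypical projection $p_\tau$ for every $\tau\in\KThat$. This is exactly Lemma~\ref{commuting-projections}(ii), applied in one of two ways according to the hypothesis: if $S\subseteq T$ then $\KS\leq\KT$ and we take $\Lie{H}_1=\KS$, $\Lie{H}_2=\KT$; if $T\subseteq S$ then $\KT\leq\KS$ and we take $\Lie{H}_1=\KT$, $\Lie{H}_2=\KS$. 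Either way $p_\sigma p_\tau=p_\tau p_\sigma$, and hence $p_\sigma$ commutes with every $P_{F_j}$ and with every $P_{F_j}^\perp=I-P_{F_j}$.

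Now fix $k\in\NN$. For $j\geq k$ we have $F_k\subseteq F_j$, hence $P_{F_j}^\perp P_{F_k}=0$; using the commutation just established,
\[
  P_{F_j}^\perp\, p_\sigma\, P_{F_k} = p_\sigma\, P_{F_j}^\perp P_{F_k} = 0,
  \qquad
  P_{F_k}\, p_\sigma\, P_{F_j}^\perp = P_{F_k} P_{F_j}^\perp\, p_\sigma = 0 .
\]
So the two families of operators in condition~(ii) of Lemma~\ref{AS-equivalent-defns} are eventually zero, a fortiori they converge to zero in norm, and therefore $p_\sigma\in\scrA[T]$. (Alternatively one could verify condition~(iii): if $P_{F_j}KP_{F_j}=K$ then $P_{F_j}(Kp_\sigma)P_{F_j}=Kp_\sigma$ and $P_{F_j}(p_\sigma K)P_{F_j}=p_\sigma K$ by the same commutation, so $p_\sigma$ preserves $T$-harmonic finiteness and is thus a two-sided multiplier of $\scrK[T]$.) I expect no real obstacle here: the only thing requiring care is matching the two cases $S\subseteq T$ and $T\subseteq S$ to the single inclusion hypothesis of Lemma~\ref{commuting-projections}. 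Unlike the main theorem, this lemma uses neither compactness arguments nor any special feature of $\SU(n)$; its content is simply that nested parabolic data yield commuting harmonic decompositions.
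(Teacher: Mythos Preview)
Your proof is correct and follows essentially the same approach as the paper: both reduce to the commutation statement of Lemma~\ref{commuting-projections}, from which it is immediate that $p_\sigma$ preserves $T$-harmonic finiteness (equivalently, has diagonal matrix with respect to the $\KT$-decomposition). The paper's proof is simply a one-line version of your parenthetical alternative via condition~(iii).
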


\begin{proof}
By Lemma \ref{commuting-projections}, $p_\sigma$ commutes with the $\KT$-isotypical projections, and so preserves $T$-spectral finiteness.
\end{proof}

\begin{remark}
 In fact, $p_\sigma\in\scrA[T]$ for any $S,T\subseteq\Sigma$, although this is not obvious yet --- see the proof of Corollary \ref{projections-in-JS}.
 \end{remark}

\begin{definition}
A harmonic $\Lie{K}$-space $H$ will be called {\em finite multiplicity} if, for each $\pi\in\Khat$, $p_\pi H$ is finite-dimensional.
\end{definition}

The right regular representation is a finite multiplicity harmonic $\Lie{K}$-space by the Peter-Weyl Theorem.  Thus $\LXE[\mu] = p_\mu L^2(\Lie{K})$ is finite multiplicity for each $\mu$, and so is the $L^2$-section space of any finite dimensional $K$-homogeneous bundle over $\scrX$.

\begin{lemma}

If $H_1$ and $H_2$ are finite multiplicity harmonic $\Lie{K}$-spaces, then $\scrK[\Sigma](H_1,H_2)$ is the space of compact operators from $H_1$ to $H_2$.
%, and $\scrA[\emptyset](E_1,E_2)=\scrA(E_1,E_2)$.

\end{lemma}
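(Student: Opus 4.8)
Since $\Sigma$ is the full set of simple roots we have $\Lie{K}_\Sigma=\Lie{K}$, so the $\Sigma$-harmonic decomposition of a harmonic $\Lie{K}$-space is precisely its Peter--Weyl decomposition into $\Lie{K}$-isotypical subspaces $p_\pi H$, $\pi\in\Khat$; in particular the finite multiplicity hypothesis says exactly that each $p_\pi H_i$ is finite-dimensional. The plan is to prove the two inclusions between $\scrK[\Sigma](H_1,H_2)$ and the space $\scr{K}(H_1,H_2)$ of compact operators separately, the first being essentially formal and the second a standard finite-rank approximation argument dressed up via Lemma \ref{KS-equivalent-defns}.

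For $\scrK[\Sigma](H_1,H_2)\subseteq\scr{K}(H_1,H_2)$: a $\Sigma$-harmonically finite operator $A$ is a \emph{finite} sum of its matrix entries $A_{\sigma\tau}=p_\sigma A p_\tau$, and each such entry maps into the finite-dimensional space $p_\sigma H_2$, hence has finite rank. Thus $A$ has finite rank, in particular is compact; since $\scrK[\Sigma](H_1,H_2)$ is by definition the operator-norm closure of the $\Sigma$-harmonically finite operators and the compacts are norm-closed, the inclusion follows.

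For the reverse inclusion I would use the enumeration $\sigma_0,\sigma_1,\dots$ of $\Khat$ and the projections $P_{F_j}=\sum_{i\le j}p_{\sigma_i}$ fixed before Lemma \ref{KS-equivalent-defns}. Because $H_1$ and $H_2$ are harmonic $\Lie{K}$-spaces, $P_{F_j}$ increases strongly to the identity on each of them. Let $T\colon H_1\to H_2$ be compact. The elementary fact to invoke is that $\|P_{F_j}^\perp T\|\to 0$ and $\|T P_{F_j}^\perp\|\to 0$: given $\epsilon>0$ choose a finite-rank $F=\sum_l\ip{\slot,e_l}f_l$ with $\|T-F\|<\epsilon$; then $\|P_{F_j}^\perp F\|\le\sum_l\|e_l\|\,\|P_{F_j}^\perp f_l\|$ and $\|F P_{F_j}^\perp\|\le\sum_l\|P_{F_j}^\perp e_l\|\,\|f_l\|$ tend to $0$, while the error terms $P_{F_j}^\perp(T-F)$ and $(T-F)P_{F_j}^\perp$ have norm $<\epsilon$ since $\|P_{F_j}^\perp\|\le 1$. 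Consequently $\|P_{F_j} T P_{F_j}-T\|\le\|P_{F_j}^\perp T\|+\|T P_{F_j}^\perp\|\to 0$, and since each $P_{F_j} T P_{F_j}=\sum_{\sigma,\tau\in F_j}p_\sigma T p_\tau$ is $\Sigma$-harmonically finite (equivalently, by condition (iii) of Lemma \ref{KS-equivalent-defns}), we conclude $T\in\scrK[\Sigma](H_1,H_2)$.

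I do not anticipate a genuine obstacle. The only point requiring care is the identification $\Lie{K}_\Sigma=\Lie{K}$, which is what makes the $\Sigma$-harmonic grading coincide with the isotypical grading underlying the definition of finite multiplicity; granting this, the argument reduces to the routine fact that a compact operator is approximated in norm by its truncations to the first finitely many isotypical blocks, which is exactly the form of membership in $\scrK[\Sigma]$ recorded in Lemma \ref{KS-equivalent-defns}.
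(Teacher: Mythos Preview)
Your proof is correct and follows essentially the same route as the paper, which simply cites Lemma \ref{KS-equivalent-defns}(iii): under the finite multiplicity hypothesis each $P_{F_j}KP_{F_j}$ has finite rank, so condition (iii) characterizes precisely the compact operators. Your write-up just makes explicit the two halves of this equivalence that the paper leaves to the reader.
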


\begin{proof}

This follows from Lemma \ref{KS-equivalent-defns} {\em(iii)}.
%For the second claim, note that  $p_\mu$ is the identity projection on $\LXE[-\mu]$ for any weight $\mu\in\Lambda_W$, so that $\Sigma$-spectral finiteness is no condition at all.

\end{proof}

% ---------------------------------------------------------------------------------------------------------

\section{Multiplication operators}

\begin{lemma}
\label{mult-ops-in-AS}

Let $f\in C(\Lie{K})$. The operator $M_f$ of multiplication by $f$ belongs to $\scrA[S](L^2(\Lie{K}))$ for any $S\subseteq\Sigma$.  

\end{lemma}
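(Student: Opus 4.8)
The plan is to reduce to the case of a matrix coefficient $f = \pi_{vw}(k) := \ip{v, \pi(k)w}$ for an irreducible representation $\pi \in \Khat$ and vectors $v,w \in V^\pi$, since the linear span of such functions is dense in $C(\Lie{K})$ by Peter--Weyl, and $\scrA[S](L^2(\Lie{K}))$ is norm-closed, while $\|M_f\| = \|f\|_\infty$ so the map $f \mapsto M_f$ is isometric. So it suffices to show that $M_f$ is $S$-harmonically proper for such an $f$, i.e. that the matrix $(p_\sigma M_f p_\tau)_{\sigma,\tau\in\KShat}$ is row- and column-finite.

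The key computation is to understand how $M_f$ interacts with the $\KS$-isotypical decomposition of the right regular representation. Since harmonic projections are taken with respect to the \emph{right} regular representation, and multiplication by $f$ does not commute with right translation, I would compute the right-translate $R_h(f\cdot g)$ for $h\in\KS$ and observe that it equals $(R_h f)(R_h g)$. Thus conjugating $M_f$ by the right regular representation of $\KS$ gives $M_{R_h f}$, and the function $h\mapsto R_h f$ spans a finite-dimensional $\KS$-subrepresentation of $C(\Lie{K})$ — namely, decomposing $\pi|_{\KS} = \bigoplus_\rho m_\rho \rho$, the span of $\{R_h f : h\in\KS\}$ lies inside a sum of copies of finitely many irreducibles $\rho_1,\dots,\rho_N\in\KShat$ appearing in $\pi|_{\KS}$. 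The precise statement I want is: if $g$ is of type $\tau$ (lies in $p_\tau L^2(\Lie{K})$), then $fg$ lies in $\bigoplus_{\sigma} p_\sigma L^2(\Lie{K})$ where $\sigma$ ranges over the irreducibles of $\KS$ occurring in $\tau \otimes \rho_i$ for some $i$ — a finite set depending only on $\tau$ and $f$. This is just the statement that $M_f$ intertwines the $\KS$-action up to the finite-dimensional "coefficient space" of $f$, together with the fact that tensoring a fixed irreducible of the compact group $\KS$ with another irreducible decomposes into finitely many irreducibles. This gives column-finiteness of $(p_\sigma M_f p_\tau)$; row-finiteness follows by applying the same argument to $M_{\bar f} = M_f^*$ (or symmetrically, since $\bar f$ is again a matrix coefficient, of $\pi^\dual$).

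Having established that $M_f$ is $S$-harmonically proper for $f$ a matrix coefficient, the proof concludes: these $f$ span a dense subspace of $C(\Lie{K})$, the assignment $f\mapsto M_f$ is linear and isometric, the $S$-harmonically proper operators are norm-dense in $\scrA[S](L^2(\Lie{K}))$ by definition, and $\scrA[S](L^2(\Lie{K}))$ is norm-closed; hence $M_f \in \scrA[S](L^2(\Lie{K}))$ for all $f\in C(\Lie{K})$. I expect the only real content — and the step to be careful about — is the finiteness of the set of $\sigma$ appearing in $p_\sigma M_f p_\tau \ne 0$: one must pin down that it depends only on the fixed data $(\tau, \pi)$ and not on anything unbounded, which comes down to the elementary fact that $\KS$ is compact so each tensor product $\tau\otimes\rho_i$ contains only finitely many irreducible constituents. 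Everything else is formal (density, closedness, isometry, and the adjoint symmetry for rows versus columns).
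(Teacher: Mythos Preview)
Your proposal is correct and follows essentially the same approach as the paper: reduce by density to matrix coefficients, then use that multiplication corresponds to tensor product under Peter--Weyl, so that the $\KS$-types of $fg$ lie among those of $(\pi|_{\KS})\otimes\tau$, a finite set; row-finiteness follows by passing to $\bar f$. The only cosmetic difference is that the paper further reduces to matrix units whose right vector $v$ is itself $\KS$-isotypical (so that a single $\rho_i$ appears rather than all constituents of $\pi|_{\KS}$), but this extra step is not essential.
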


\begin{proof}

In short, after applying the Peter-Weyl Isomorphism, multiplication of functions transforms to tensor product of representations.  Since the tensor product of two irreducible representations of $\KS$ decomposes again into finitely many irreducibles, this operation is $S$-harmonically proper.  We now make this precise.

Suppose first that $f$ is a matrix unit, that is, for some $\pi\in\Khat$ and $v\in V^\pi$, $w^\dual\in V^{\pi\dual}$
\begin{equation}
\label{matrix-unit1}
  f(k) = (w^\dual, \pi(k) v).
\end{equation}
Suppose moreover that $v$ is isotypical for $\KS$ --- specifically, $v\in p_{\tau} V^\pi$ for some $\tau\in\KShat$.

Consider an arbitary irreducible $\sigma\in\KShat$.  Let $s\in L^2(\Lie{K})$.  If $s$ is itself a matrix unit,
\begin{equation}
\label{matrix-unit2}
  s(k) = (\eta^\dual, \rho(k) \xi), 
\end{equation}
for some $\rho\in\Khat$ and $\xi\in V^{\rho}$, $\eta^\dual\in V^{\rho\dual}$, then 
$$
  (p_\sigma s)(k)= (\eta^\dual, \rho(k)\, p_\sigma\xi)
$$
The product of the matrix units \eref{matrix-unit1} and \eref{matrix-unit2} is
$$
  M_f(p_\sigma s)(k) = \left(w^\dual\otimes \eta^\dual, (\pi\!\otimes\!\rho)(k)\; (v\!\otimes\! p_\sigma\xi)\right).
$$
The vector $v\!\otimes\! p_\sigma\xi$ lies in a $\KS$-subrepresentation of $\pi\!\otimes\!\rho$ isomorphic to $\tau\!\otimes\!\sigma$, which decomposes into a finite set $F$ of $\KS$-types.  Thus, for any $s\in L^2(\Lie{K})$, $P_F^\perp M_f \,p_\sigma s=0$.

The adjoint of multiplication by $f$ is multiplication by $\overline{f}$, which is itself a $\KS$-isotypical matrix unit.  (Specifically, if we denote by $v\mapsto v^\dual$ the canonical anti-linear isomorphism from $V^\pi$ to $V^{\pi\dual}$,  then $  \overline{f(k)}  =  \overline{(w^\dual, \pi(k)v)} = (w, \pi^\dual(k) v^\dual) $.)  It follows that $p_\tau M_f P_F^\perp = (P_F^\perp M_{\overline{f}} \,p_\tau)^* =0$.  This proves that multiplication by $M_f$ is $S$-harmonically proper.  Such $\KS$-isotypical matrix units $f$ span a dense subspace of $C(\Lie{K})$.
   
\end{proof}
  
If $\mu$ and $\nu$ are weights for $\Lie{K}$, then for any $f\in\CXE[\mu]$ and $s\in\LXE[\mu]$, the product $f.s$ is in $\LXE[\mu+\nu]$, as can be readily verified from the defining equivariance property of \eref{mu-equivariance}.  Thus, for any $S\subseteq\Sigma$ the multiplication operator $M_f$ for $f\in\CXE[\mu]$ belongs to $\scrA[S](\LXE[\nu],\LXE[\nu+\mu])$.

% ---------------------------------------------------------------------------------------------------------

\section{Lattice of ideals}

\begin{lemma}
\label{ordering}
If $S \subseteq T\subseteq\Sigma$, then $\scrK[T]\subseteq\scrK[S]$.
\end{lemma}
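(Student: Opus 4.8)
The plan is to prove the sharper statement that every $T$-harmonically finite operator is in fact already $S$-harmonically finite; the inclusion $\scrK[T]\subseteq\scrK[S]$ then follows immediately by passing to operator-norm closures. So let $K\colon H_1\to H_2$ be a $T$-harmonically finite map between harmonic $\Lie{K}$-spaces. By definition there are only finitely many pairs $(\rho,\rho')\in\KThat\times\KThat$ with $p_\rho K p_{\rho'}\neq 0$; letting $G\subseteq\KThat$ be the finite set of all representations occurring in such a pair, and using that the $\KT$-isotypical projections sum strongly to the identity on any harmonic $\Lie{K}$-space, one gets $P_G K = K = K P_G$, hence $K = P_G K P_G$, where as usual $P_G=\sum_{\rho\in G}p_\rho$.

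Next I would pass from the finite $\KT$-band $G$ to a finite $\KS$-band. Since $S\subseteq T$ we have $\KS\subseteq\KT$, so each $\rho\in G$ is a finite-dimensional $\KT$-representation whose restriction $\rho|_{\KS}$ contains only finitely many $\KS$-types; let $F\subseteq\KShat$ be the (finite) union of these sets of $\KS$-types over all $\rho\in G$. For $\rho\in G$ the range of $p_\rho$ lies inside the span of the $\KS$-isotypical subspaces indexed by $F$, so $P_F p_\rho = p_\rho$, and taking adjoints $p_\rho P_F = p_\rho$ as well (both projections being self-adjoint; alternatively invoke Lemma \ref{commuting-projections}(ii), which gives $P_F P_G = P_G P_F$ directly since $\KS\subseteq\KT$). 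Summing over $\rho\in G$ yields $P_F P_G = P_G = P_G P_F$. Combining this with $K = P_G K P_G$ gives $K = P_F P_G K P_G P_F = P_F K P_F$ (with $P_F$ acting on $H_2$ on the left and on $H_1$ on the right). Since $F$ is finite, $p_\sigma K p_\tau = p_\sigma P_F K P_F p_\tau = 0$ whenever $\sigma\notin F$ or $\tau\notin F$, so $K$ is $S$-harmonically finite; as $\scrK[S]$ is the operator-norm closure of the $S$-harmonically finite operators, we conclude $K\in\scrK[S]$, and therefore $\scrK[T]\subseteq\scrK[S]$.

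I do not expect a genuine obstacle here: the argument is essentially bookkeeping with orthogonal projections. The one point requiring care is the absorption of the finite $\KT$-band $P_G$ into a finite $\KS$-band $P_F$, which rests on the two facts that $\KS\subseteq\KT$ (so that restriction of a finite-dimensional $\KT$-representation to $\KS$ meets only finitely many $\KS$-types) and that the $\KS$- and $\KT$-isotypical projections commute, i.e.\ Lemma \ref{commuting-projections}(ii). (One could equally deduce the inclusion of closures from the equivalent characterisation in Lemma \ref{KS-equivalent-defns}(ii) by comparing $\KT$-bands with $\KS$-bands, but the direct route above is cleaner and gives the stronger statement at the level of finite operators.)
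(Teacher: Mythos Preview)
Your proof is correct and follows exactly the paper's approach: the paper's one-line proof simply asserts that each irreducible of $\KT$ decomposes into finitely many irreducibles of $\KS$, so $T$-harmonically finite operators are $S$-harmonically finite. You have written out the same argument in full detail, including the absorption step $P_F P_G = P_G$ that the paper leaves implicit.
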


\begin{proof}
Each irreducible representation for $\Lie{K}_T$ decomposes into only finitely many irreducibles for $\KS$,  so $T$-harmonically finite operators are $S$-harmonically finite.
\end{proof}

\begin{remark}
It is not in general true that $\scrK[T]$ is an ideal $\scrK[S]$ when $S\subseteq T$.  For instance, on an infinite dimensional weight space, such as $H=\LXE[\mu] \subseteq L^2(\Lie{K})$, every bounded operator is $\emptyset$-harmonically finite.  But if $\emptyset \subsetneqq T \subsetneqq \Sigma$ then $\scr{B}(H) = \scrK[\emptyset]  \lneqq \scrK[T](H) \lneqq \scrK[\Sigma](H) =\scrK(H) $, so $\scrK[T](H)$ cannot be an ideal in $\scrK[\emptyset](H)$.
\end{remark}

In order to produce a lattice of ideals, we make the following definition.

\begin{definition}
Let $\scrA = \bigcap_{T\subseteq \Sigma} \scrA[T]$, and for each $S\subseteq\Sigma$ put
$\scrJ[S] = \scrK[S] \cap \scrA $. 
\end{definition}

Now $S\subseteq T$ implies $\scrJ[T]  \triangleleft \scrJ[S]$.

\medskip

The main result of this section is the following crucial fact about the meet operation for the lattice of ideals $\scrJ[S]$.

\medskip

\textbf{Throughout what follows we make the standing assumption that $\Lie{K}$ is a product of special unitary groups, $  \prod_{i=1}^N \SU(n_i)$,   ($n_i\geq2$).}  It is worth remarking, however, that we expect the results are true for arbitary compact semisimple groups.

\begin{theorem}
\label{meet}

If $S,T\subseteq\Sigma$ then $\scrJ[S]\cap\scrJ[T] = \scrJ[S\cup T]$.
 
\end{theorem}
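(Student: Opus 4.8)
The plan is to prove the two inclusions separately. The inclusion $\scrJ[S\cup T]\subseteq\scrJ[S]\cap\scrJ[T]$ is immediate: since $S\subseteq S\cup T$ and $T\subseteq S\cup T$, Lemma~\ref{ordering} gives $\scrK[S\cup T]\subseteq\scrK[S]\cap\scrK[T]$, and intersecting with $\scrA$ yields $\scrJ[S\cup T]=\scrK[S\cup T]\cap\scrA\subseteq\scrK[S]\cap\scrK[T]\cap\scrA=\scrJ[S]\cap\scrJ[T]$. All of the content is in the reverse inclusion, and this is precisely where essential orthotypicality of $\KS$ and $\KT$ enters.

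So fix $A\in\scrJ[S]\cap\scrJ[T]=\scrK[S]\cap\scrK[T]\cap\scrA$, write $U=S\cup T$, and fix $\epsilon>0$; since $A\in\scrA$ already, it suffices to prove $A\in\scrK[U]$. From $A\in\scrK[S]$ and $A\in\scrK[T]$, Lemma~\ref{KS-equivalent-defns} provides finite sets $F_S\subseteq\KShat$ and $F_T\subseteq\KThat$ with each of $\|P_{F_S}^\perp A\|$, $\|AP_{F_S}^\perp\|$, $\|P_{F_T}^\perp A\|$, $\|AP_{F_T}^\perp\|$ less than $\epsilon$. Note that $P_{F_S}$ and $P_{F_T}$ need not commute with one another, but each commutes with all $\Lie{K}_U$-isotypical projections (Lemma~\ref{commuting-projections}, since $\KS,\KT\subseteq\Lie{K}_U$) and with all weight projections; this is all that is used below. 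Two applications of the triangle inequality give first $\|A-P_{F_S}AP_{F_S}\|<2\epsilon$ and then, by squeezing the estimate $\|A-P_{F_T}AP_{F_T}\|<2\epsilon$ between $P_{F_S}$ on each side, $\|A-Q^{*}AQ\|<4\epsilon$, where $Q:=P_{F_T}P_{F_S}$ (so $Q^{*}=P_{F_S}P_{F_T}$).

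The crucial claim is that $Q$ and $Q^{*}$ lie in $\scrK[U]$; since each is a finite sum of operators of the form $p_\sigma p_\tau$ (with $\sigma\in F_S$, $\tau\in F_T$, in either order), it is enough to show $p_\sigma p_\tau\in\scrK[U]$ for each such pair. Here is where Proposition~\ref{asymptotic-orthogonality-true} enters: $\KS$ and $\KT$ generate $\Lie{K}_U$ (their Lie algebras generate $\lie{k}_U$), so that proposition supplies essential orthotypicality of the pair $(\KS,\KT)$ inside $\Lie{K}_U$, and by its norm formulation (Lemma~\ref{asymptotic-orthogonality}) one has $\|p_\sigma^\pi p_\tau^\pi\|_{V^\pi}\to 0$ as $\pi$ leaves any finite subset of $\hat{\Lie{K}}_U$, where $p_\sigma^\pi$ (resp.\ $p_\tau^\pi$) is the $\sigma$- (resp.\ $\tau$-) isotypical projection inside the $\Lie{K}_U$-irreducible $V^\pi$. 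Because $p_\sigma$ and $p_\tau$ each commute with the $\Lie{K}_U$-isotypical projections, $p_\sigma p_\tau$ is block diagonal for the $\Lie{K}_U$-isotypical decomposition of the ambient harmonic $\Lie{K}$-space; decomposing each block further through the defining weight-space decomposition and the $\Lie{K}$-isotypical decomposition, and using that $\Lie{K}_U$ — hence $\KS$ and $\KT$ — acts trivially on every resulting $\Lie{K}_U$-multiplicity space, the block of $p_\sigma p_\tau$ at $\Lie{K}_U$-type $\pi$ has the form $1\otimes(p_\sigma^\pi p_\tau^\pi)$. Since moreover $p_\sigma$ and $p_\tau$ commute with $P_G^\perp$, we get $P_G^\perp p_\sigma p_\tau=p_\sigma p_\tau P_G^\perp$ with norm $\sup_{\pi\notin G}\|p_\sigma^\pi p_\tau^\pi\|_{V^\pi}$ for finite $G\subseteq\hat{\Lie{K}}_U$, and this tends to $0$ as $G$ exhausts $\hat{\Lie{K}}_U$; by Lemma~\ref{KS-equivalent-defns} this says $p_\sigma p_\tau\in\scrK[U]$. (The point of the tensor-product reduction is precisely that it sidesteps the fact that harmonic $\Lie{K}$-spaces need not have finite multiplicity as $\Lie{K}_U$-representations.)

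Finally, assemble the pieces. Since $A\in\scrA\subseteq\scrA[U]$, Lemma~\ref{AS-equivalent-defns} shows $A$ is a two-sided multiplier of $\scrK[U]$; combined with $Q,Q^{*}\in\scrK[U]$ this gives $AQ\in\scrK[U]$ and hence $Q^{*}AQ\in\scrK[U]$. As $\|A-Q^{*}AQ\|<4\epsilon$ for arbitrary $\epsilon>0$ and $\scrK[U]$ is norm-closed, $A\in\scrK[U]$, so $A\in\scrK[U]\cap\scrA=\scrJ[U]=\scrJ[S\cup T]$. The genuinely hard step is the claim $Q,Q^{*}\in\scrK[U]$: it rests entirely on Proposition~\ref{asymptotic-orthogonality-true}, and the two places where real work is hidden are verifying that the pair $(\KS,\KT)$ lies within the scope of that proposition and passing from a statement about individual irreducibles of $\Lie{K}_U$ to a uniform operator-norm bound valid on harmonic $\Lie{K}$-spaces of arbitrary $\Lie{K}_U$-multiplicity.
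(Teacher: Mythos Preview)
Your proof is correct and rests on the same mechanism as the paper's: the crucial ingredient in both is that $P_{F_S}P_{F_T}\in\scrK[S\cup T]$, which is Lemma~\ref{asymptotic-orthogonality}(i) combined with Proposition~\ref{asymptotic-orthogonality-true}. The paper's argument is more compressed and phrased slightly differently: it takes $A\in\scrJ[S]$ and $B\in\scrJ[T]$ as separate elements, approximates the product $AB$ by $A\,P_{F_1}P_{F_2}\,B\in\scrJ[S\cup T]$, and leaves implicit the standard $C^*$-algebra fact that for closed two-sided ideals one has $\scrJ[S]\cap\scrJ[T]=\overline{\scrJ[S]\scrJ[T]}$. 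Your direct sandwiching $A\approx Q^*AQ$ of a single element of the intersection avoids invoking that fact and is arguably cleaner, but the underlying idea is the same.

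One remark: your tensor-product argument for $p_\sigma p_\tau\in\scrK[U]$ (the passage ``from individual irreducibles of $\Lie{K}_U$ to arbitrary $\Lie{K}_U$-multiplicity'') is unnecessary. Lemma~\ref{asymptotic-orthogonality}(i) already asserts $p_\tau p_\sigma\in\scrK[S\cup T]$ on \emph{any} harmonic $\Lie{K}$-space, and Proposition~\ref{asymptotic-orthogonality-true} states that this holds under the standing hypothesis on $\Lie{K}$; you may simply cite these. The infinite-multiplicity issue you are worried about is dealt with inside the proof of Lemma~\ref{asymptotic-orthogonality} (the (iii)$\Rightarrow$(i) direction), by embedding an arbitrary harmonic $\Lie{K}$-space into a direct sum of copies of $L^2(\Lie{K})$.
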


The proof of Theorem \ref{meet} will occupy much of the rest of this paper.  We begin with a lemma which generalizes the notion of `essential orthotypicality' from the introduction.
 
 \begin{lemma}
 \label{asymptotic-orthogonality}
 
 Let $\Lie{K}$ be as above and $S,T\subseteq\Sigma$.  The following are equivalent.
 \begin{enumerate}
 \item On any harmonic $\Lie{K}$-space $H$, $p_\tau p_\sigma \in \scrK[S\cup T](H)$ for all $\sigma\in\KShat$, $\tau\in\KThat$.
 \item For any $\sigma\in\KShat$, $\tau\in\KThat$  and any $\epsilon>0$, there exist only finitely many irreducible representations $\pi\in\hat{\Lie{K}}_{S\cup T}$ having unit vectors $\xi\in p_\sigma V^\pi$, $\eta \in p_\tau V^\pi$ with $|\ip{\eta, \xi}|>\epsilon$.
 \item For any $\sigma\in\KShat$ and any $\epsilon>0$, there exist only finitely many irreducible representations $\pi\in\hat{\Lie{K}}_{S\cup T}$ having a unit vector $\xi\in p_\sigma V^\pi$ and a unit vector $\eta$ fixed by $\Lie{K}_T$ with $|\ip{\eta, \xi}|>\epsilon$.

 \end{enumerate}
 \end{lemma}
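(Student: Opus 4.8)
The plan is to prove the cycle of implications (i) $\Rightarrow$ (ii) $\Rightarrow$ (iii) $\Rightarrow$ (i), reducing everything to the model harmonic $\Lie{K}$-space $L^2(\Lie{K})$ (the right regular representation), which by Peter-Weyl breaks into $\bigoplus_{\pi\in\Khat} V^{\pi\dual}\otimes V^\pi$ with the relevant isotypical projections acting only on the right-hand $V^\pi$ factor. Since any harmonic $\Lie{K}$-space is a direct sum of weight spaces sitting inside copies of $L^2(\Lie{K})$, and since $\scrK[S\cup T]$-membership on such a space is detected (Lemma \ref{KS-equivalent-defns}(iii)) by $P_{F_j}^\perp$-compressions going to zero in norm, the operator $p_\tau p_\sigma$ lies in $\scrK[S\cup T](H)$ for every harmonic $H$ if and only if it does so on $L^2(\Lie{K})$, if and only if $\|P_{F_j}^\perp p_\tau p_\sigma P^\pi\|\to 0$ uniformly over $\pi$ in a suitable sense, where $P^\pi$ is the Peter--Weyl projection onto the $\pi$-component. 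This last quantity is exactly $\sup\{|\ip{\eta,\xi}| : \xi\in p_\sigma V^\pi,\ \eta\in p_\tau V^\pi \cap (\bigoplus_{\pi'\in F_j}\ldots)^\perp,\ \|\xi\|=\|\eta\|=1\}$, and unwinding the bookkeeping of which $\pi$ contribute gives the equivalence of (i) and (ii). Concretely: for (i) $\Rightarrow$ (ii), if (ii) fails there are infinitely many $\pi\in\hat{\Lie{K}}_{S\cup T}$ with unit vectors $\xi_\pi\in p_\sigma V^\pi$, $\eta_\pi\in p_\tau V^\pi$, $|\ip{\eta_\pi,\xi_\pi}|>\epsilon$, and these furnish, inside $L^2(\Lie{K})$, vectors $\hat\xi_\pi\in p_\sigma L^2(\Lie{K})$ with $\|p_\tau \hat\xi_\pi\|>\epsilon\|\hat\xi_\pi\|$ and living in pairwise orthogonal $\pi$-blocks, so $p_\tau p_\sigma$ cannot be approximated by $P_{F_j}p_\tau p_\sigma P_{F_j}$ — contradicting (i). For (ii) $\Rightarrow$ (i), one shows conversely that the finitely many exceptional $\pi$'s contribute a harmonically finite (hence negligible) piece, while on the rest $\|p_\tau p_\sigma|_{V^{\pi\dual}\otimes V^\pi}\|\leq\epsilon$; summing the geometric-type tail gives $P_{F_j}^\perp p_\tau p_\sigma\to 0$ and $p_\tau p_\sigma P_{F_j}^\perp\to 0$, which is Lemma \ref{KS-equivalent-defns}(ii).

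For (ii) $\Leftrightarrow$ (iii), the implication (ii) $\Rightarrow$ (iii) is trivial: a $\Lie{K}_T$-fixed unit vector $\eta$ is in particular of type $\trivT$, i.e. $\eta\in p_{\trivT}V^\pi$, so (iii) is the special case $\tau=\trivT$ of (ii). The substantive direction is (iii) $\Rightarrow$ (ii), and the natural device is Frobenius reciprocity / an induction trick: given $\tau\in\KThat$, embed $V^\tau$ inside a suitable induced or tensored representation so that a $\tau$-isotypical vector $\eta\in p_\tau V^\pi$ with $|\ip{\eta,\xi}|>\epsilon$ is promoted to a genuine $\Lie{K}_T$-fixed vector in $p_\tau^\dual V^\pi\otimes V^\tau$ (or a similar gadget) that still pairs nontrivially with $\xi\otimes(\text{something})$. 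One wants: the set of $\pi\in\hat{\Lie{K}}_{S\cup T}$ bad for $(\sigma,\tau)$ in the sense of (ii) is contained in the set of $\pi$ bad for $(\sigma',\trivT)$ in the sense of (iii), for some finite family of $\sigma'$ obtained from $\sigma$ by tensoring with the finitely many $\Lie{K}$-types occurring in a fixed finite-dimensional representation through which $\tau$ is realized. Since finitely many $\sigma'$ each contribute finitely many bad $\pi$, so does $(\sigma,\tau)$.

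The main obstacle I anticipate is precisely the bookkeeping in (iii) $\Rightarrow$ (ii): making the passage from ``$\tau$-isotypical vector'' to ``genuine fixed vector'' quantitatively, so that the lower bound $|\ip{\eta,\xi}|>\epsilon$ survives (possibly with $\epsilon$ replaced by $\epsilon' = \epsilon/C$ for a constant depending only on $\sigma,\tau$) and so that one lands in $\hat{\Lie{K}}_{S\cup T}$ rather than in $\Khat$. The issue is that $\eta$ being of type $\tau$ does not directly give a fixed vector without enlarging the ambient space, and one must check that the enlargement does not destroy irreducibility over $\Lie{K}_{S\cup T}$ or introduce infinitely many new components; choosing the auxiliary representation to be, say, $V^{\tau\dual}$ acted on trivially (so working inside $V^\pi\otimes V^{\tau\dual}$, whose $\Lie{K}_T$-fixed vectors are $\Hom_{\Lie{K}_T}(V^\tau,V^\pi)$) and then decomposing over $\Lie{K}_{S\cup T}$ handles this, but requires care that $\xi$ upgrades to a vector of type in a controlled finite set. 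The equivalence (i) $\Leftrightarrow$ (ii) is, by contrast, essentially a direct translation through Peter--Weyl and Lemma \ref{KS-equivalent-defns}, and I expect it to be routine.
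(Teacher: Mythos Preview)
Your treatment of (i)$\Leftrightarrow$(ii) and (ii)$\Rightarrow$(iii) is sound and tracks the paper's arguments closely (the paper does (i)$\Rightarrow$(ii) directly rather than by contrapositive, but the content is the same). The divergence is entirely in the hard direction.

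\medskip

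\textbf{A gap in your (iii)$\Rightarrow$(ii).} Your concrete suggestion---tensor with $V^{\tau\dual}$ ``acted on trivially'' so that the $\Lie{K}_T$-fixed vectors of $V^\pi\otimes V^{\tau\dual}$ become $\Hom_{\Lie{K}_T}(V^\tau,V^\pi)$---is inconsistent. If $\Lie{K}_{S\cup T}$ (hence $\Lie{K}_T$) acts trivially on the second factor, the $\Lie{K}_T$-fixed vectors are $(V^\pi)^{\Lie{K}_T}\otimes V^{\tau\dual}$, not $\Hom_{\Lie{K}_T}(V^\tau,V^\pi)$; you get the latter only if $\Lie{K}_T$ acts via $\tau^\dual$ on $V^{\tau\dual}$, but then $V^\pi\otimes V^{\tau\dual}$ carries no $\Lie{K}_{S\cup T}$-action and you cannot decompose it into $\hat{\Lie{K}}_{S\cup T}$-pieces. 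The fix is to choose a finite-dimensional $\Lie{K}_{S\cup T}$-module $W$ whose restriction to $\Lie{K}_T$ contains $\tau^\dual$, and work in $V^\pi\otimes W$. This does work, but now $V^\pi\otimes W$ splits into an \emph{unbounded} (in $\pi$) number of $\Lie{K}_{S\cup T}$-irreducibles, and you need a Cauchy--Schwarz argument across that decomposition to conclude that \emph{some} irreducible constituent $\pi'$ carries a $\Lie{K}_T$-fixed unit vector and a $\sigma'$-typed unit vector (for $\sigma'$ in a fixed finite set depending only on $\sigma,W$) with inner product bounded below by $\epsilon/C(\sigma,\tau,W)$. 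You anticipated difficulty here, but the proposal as written does not supply this step.

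\medskip

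\textbf{How the paper does it.} The paper bypasses (iii)$\Rightarrow$(ii) entirely and proves (iii)$\Rightarrow$(i) by an operator-algebraic argument on $L^2(\Lie{K})$. First, (iii) gives directly that $p_\sigma p_{\trivT}\in\scrK[S\cup T]$ for every $\sigma\in\KShat$. For general $\tau$, the paper invokes Lemma~\ref{finite-generation}: $p_\tau C(\Lie{K})$ is finitely generated projective over $C(\XT)$, so the identity on $p_\tau L^2(\Lie{K})$ factors as $\sum_i M_{t_i}\,p_{\trivT}\,M_{\overline{t_i}}$ for finitely many $t_i\in p_\tau C(\Lie{K})$. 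Hence
\[
  p_\sigma p_\tau \;=\; \sum_i p_\sigma\, M_{t_i}\, p_{\trivT}\, M_{\overline{t_i}}\, p_\tau.
\]
Since multiplication operators lie in $\scrA[S]$ (Lemma~\ref{mult-ops-in-AS}), each $p_\sigma M_{t_i}$ is approximated by $p_\sigma M_{t_i} P_{F_i}$ with $F_i\subset\KShat$ finite; then $P_{F_i}p_{\trivT}\in\scrK[S\cup T]$ by the special case already in hand, and the remaining factors are in $\scrA[S\cup T]$. This avoids any tensoring or counting of $\Lie{K}_{S\cup T}$-constituents: the passage from $\trivT$ to general $\tau$ is handled by the Hilbert-module finite-generation statement rather than by representation-theoretic induction.
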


 \begin{proof}
 {\em(i) $\Rightarrow$ (ii)}:   Let $\sigma\in\KShat$, $\tau\in\KThat$ and $\pi\in\hat{\Lie{K}}_{S\cup T}$.  Let $U$ be the right regular representation of $\Lie{K}$ on $H=L^2(\Lie{K})$.  Note that every $\pi\in\irrep{K}_{S\cup T}$ occurs with nonzero multiplicity in $U|_{\Lie{K}_{S\cup T}}$.  Suppose $\xi\in p_\sigma p_\pi H$ and $\eta\in p_\tau p_\pi H$ are unit vectors.  
By assumption, $p_\sigma p_\tau \in \scrK[S\cup T]$, so there exists a finite set $F\subseteq \hat{\Lie{K}}_{S\cup T}$ such that  $\left\|P_F^\perp p_\sigma p_\tau\right\| <\epsilon$.  If $\pi\notin F$ then, since $P_F^\perp$ commutes with $p_\tau$ and $p_\sigma$ (Lemma \ref{commuting-projections}),
$$ 
 |\ip{\eta,\xi}| =  |\ip{P_F^\perp \eta,\xi}| = |\ip{P_F^\perp p_\tau \eta, p_\sigma \xi}|  =  |\ip{P_F^\perp p_\sigma p_\tau \eta, \xi}| <\epsilon.
$$

{\em (ii) $\Rightarrow$ (iii)}:  Immediate, by letting $\tau$ be the trivial representation of $\Lie{K}_T$.

{\em (iii) $\Rightarrow$ (i)}:   Since every irreducible representation of $\Lie{K}$ appears in the right regular representation with nonzero multiplicity, it follows that any unitary representation of $\Lie{K}$ will embed in a (possibly infinite) direct sum of copies of $L^2(\Lie{K})$.  Consequently, it suffices to prove {\em (i)} for  $H=L^2(\Lie{K})$ with the right regular representation.

%Recall that, for any $\upsilon\in\irrep{K}_{SUT}$, $p_\upsilon$ commutes with $p_\sigma$ and $p_\tau$ (Lemma \ref{commuting-projections}). 
Property {\em (iii)} implies that for any $\epsilon>0$ there is a finite set $F_0\subseteq\irrep{K}_{S\cup T}$ such that 
$$
\| (p_\trivT P_{F_0}^\perp)^* (p_\sigma P_{F_0}^\perp) \| = \| P_{F_0}^\perp (p_\trivT p_\sigma)\| = \| (p_\trivT p_\sigma) P_{F_0}^\perp \| < \epsilon.  
$$
Therefore, by Lemma \ref{KS-equivalent-defns}, 
\begin{equation}
\label{pp1}
  p_\sigma p_\trivT  \in \scrK[S\cup T],
\end{equation}
for every $\sigma\in\KShat$.  We want to generalize this from $\tau=\trivT$ to arbitary $\tau\in\KThat$.

Recall that for any $\tau\in\hat{\Lie{K}}_T$, $p_\tau C(\Lie{K})$ is a finitely generated projective module over $C(\XT)$ (Lemma \ref{finite-generation}).  Thus there are functions $t_1,\ldots,t_m \in p_\tau C(\Lie{K})$ such that the identity operator on  $p_\tau C(\Lie{K})$ can be factorized as
\begin{equation}
\label{identity-factorization}
  I = \sum_{i=1}^m t_i  \ipS{t_i,\slot}  =  \sum_{i=1}^m M_{t_i}\, p_\trivT \, M_{\overline{t_i}},
\end{equation}
where the latter uses Equation \eref{ES-pairing} and $M_f$ denotes multiplication by $f$.
We therefore have
$$
  p_\sigma p_\tau = \sum_{i=1}^m p_\sigma M_{t_i} p_\trivT M_{\overline{t_i}} p_\tau.
$$
Since $M_{t_i}\in\scrA[S]$ (Lemma \ref{mult-ops-in-AS}), for any $\epsilon>0$ there is a finite set $F_i\subseteq \KShat$ such that $\|p_\sigma M_{t_i} P_{F_i}^\perp\| < \epsilon/m$.  Then
\begin{equation}
\label{pp-approximation}
  \|~ p_\sigma p_\tau - \sum_{i=1}^m p_\sigma M_{t_i} P_{F_i} p_\trivT M_{\overline{t_i}} p_\tau ~ \| < \epsilon.
\end{equation}
Now, $P_{F_i} p_\trivT \in \scrK[S\cup T]$ by \eref{pp1}, while $M_{t_i}$, $M_{\overline{t_i}}$, $p_\sigma$ and $p_\tau$ are in $\scrA[S\cup T]$ (Lemmas \ref{mult-ops-in-AS} and \ref{projections-in-AS}).  Thus \eref{pp-approximation} gives an $\epsilon$-approximation of $p_\sigma p_\tau$ by an operator in $\scrK[S\cup T]$.  

\end{proof}

 \begin{proposition}
 \label{asymptotic-orthogonality-true}
 
 The equivalent properties of Lemma \ref{asymptotic-orthogonality} are true for any $S,T\subseteq\Sigma$ when $\Lie{K}$ is a product of special unitary groups.
 
 \end{proposition}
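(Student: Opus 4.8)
The plan is to strip the statement down, by a chain of elementary reductions, to a single asymptotic fact about the restriction of irreducible representations of $\SU(n)$ to block subgroups, and then to prove that fact by an explicit examination of branching rules.

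\textbf{Reductions.} Since $\Lie{K}=\prod_i\SU(n_i)$, and the set of simple roots together with the subgroups $\KS$, $\KT$ all split as products over the factors, the entire assertion factorises, and it suffices to take $\Lie{K}=\SU(n)$. Next, one may assume $S\cup T=\Sigma$: in general $\Lie{K}_{S\cup T}$ is reductive with derived subgroup $\Lie{K}''$ a product of special unitary groups, the central torus of $\Lie{K}_{S\cup T}$ acts by a scalar on each irreducible $V^\pi$, so the $\KS$- and $\KT$-isotypical decompositions of $V^\pi$ coincide whether read in $\Lie{K}_{S\cup T}$ or in $\Lie{K}''$; as $\KS\cap\Lie{K}''$ and $\KT\cap\Lie{K}''$ generate $\Lie{K}''$, the required properties for $(\KS,\KT)$ inside $\Lie{K}_{S\cup T}$ follow from the case $S\cup T=\Sigma$ applied to the smaller product $\Lie{K}''$. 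Finally — the one genuinely useful reduction — since $\Lie{K}_{\Sigma\setminus S}\subseteq\KT$ whenever $S\cup T=\Sigma$, every $\KT$-fixed vector is $\Lie{K}_{\Sigma\setminus S}$-fixed, so $p_\trivT\leq p_{\mathbbm{1}_{\Sigma\setminus S}}$ on each $V^\pi$ and $\|p_\sigma p_\trivT|_{V^\pi}\|\leq\|p_\sigma p_{\mathbbm{1}_{\Sigma\setminus S}}|_{V^\pi}\|$; once property (iii) of Lemma~\ref{asymptotic-orthogonality} is established for the pair $(S,\Sigma\setminus S)$, Lemma~\ref{commuting-projections}(ii) and Lemma~\ref{projections-in-AS} recover it for $(S,T)$. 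Hence we may assume $\Lie{K}=\SU(n)$ and that $\{S,T\}$ is a partition of $\Sigma$.

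\textbf{Reformulation.} By Lemma~\ref{asymptotic-orthogonality} it is enough to verify property (iii): for each $\sigma\in\KShat$ and each $\epsilon>0$ only finitely many $\pi\in\widehat{\SU(n)}$ carry a unit vector $\xi\in p_\sigma V^\pi$ and a $\KT$-fixed unit vector $\eta$ with $|\ip{\eta,\xi}|>\epsilon$; taking the supremum over such $\xi,\eta$ this reads $\|p_\trivT p_\sigma|_{V^\pi}\|\to0$ as $\pi\to\infty$ in $\widehat{\SU(n)}$. Now $\eta$, being $\Lie{T}$-fixed, has weight $0$, whereas $\xi$, lying in a copy of $V^\sigma$, is supported on the fixed finite set of weights of $\sigma$; thus $\ip{\eta,\xi}=\ip{\eta,p_0\xi}$, and it suffices to bound, inside the zero-weight space $(V^\pi)_0$, the cosine of the angle between the $\KT$-invariant subspace and $p_0 p_\sigma V^\pi$, uniformly as $\pi$ grows.

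\textbf{Core estimate and main obstacle.} This last bound is the substantive part. I would parametrise $\widehat{\SU(n)}$ by dominant weights $\lambda$ and, via the iterated branching rules for the chains of block subgroups refining $\KS$ and $\KT$, describe $p_\sigma V^{\pi_\lambda}$ and $(V^{\pi_\lambda})^{\KT}$ in Gelfand--Tsetlin / Littlewood--Richardson terms, the aim being to show that although both the multiplicity of $\sigma$ in $\pi_\lambda|_{\KS}$ and the dimension of the $\KT$-invariants grow with $\lambda$, these two subspaces of the zero-weight space become asymptotically orthogonal: a $\KT$-invariant unit vector is forced to spread across $\KS$-types whose labels grow with $\lambda$, so its component in the fixed isotypical space $p_\sigma V^{\pi_\lambda}$ tends to $0$ in norm. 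Concretely I would estimate the largest eigenvalue of $p_\sigma p_\trivT p_\sigma|_{V^{\pi_\lambda}}=\dim\sigma\int_{\KS}\overline{\chi_\sigma(g)}\,p_\trivT\pi_\lambda(g)p_\trivT\,dg$, controlling the ensuing $\KT$-bi-invariant matrix coefficients of $\pi_\lambda$ uniformly in $\lambda$; the combinatorics of the $A_{n-1}$ root system (and the fact that a tensor product of two block-diagonal representations decomposes into controllably many pieces) is exactly where the hypothesis on $\Lie{K}$ enters. The principal obstacle is precisely this uniform branching estimate, and it cannot be dodged by reducing $\sigma$ to the trivial representation: the finite-generation factorisation $p_\sigma=\sum_j M_{s_j}p_\trivS M_{\overline{s_j}}$ of Lemma~\ref{finite-generation} inserts a multiplication operator between $p_\trivS$ and $p_\trivT$ that cannot be absorbed without already knowing the conclusion, so the estimate must be carried out for every $\sigma\in\KShat$ and uniformly over all $\pi$ and all $\KT$-invariant vectors.
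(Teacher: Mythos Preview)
Your reductions --- to a single $\SU(n)$, to $S\cup T=\Sigma$, and via $p_{\trivT}\leq p_{\mathbbm{1}_{\Sigma\setminus S}}$ to the partition $T=\Sigma\setminus S$ --- are all valid (the last is a clean observation the paper does not make; the projection inequality alone suffices, so invoking Lemmas~\ref{commuting-projections} and~\ref{projections-in-AS} is unnecessary). Your diagnosis that one cannot dodge general $\sigma$ via the finite-generation factorisation is also correct. The gap is that after these reductions you still face \emph{every} partition $(S,\Sigma\setminus S)$, and you give no concrete mechanism for the core estimate in even one such case. For $n\geq4$ there are partitions --- e.g.\ $S=\{\alpha_1,\alpha_3\}$ in $\SU(4)$, with $\KS$ the $2{+}2$ block-diagonal group and $\Lie{K}_{\Sigma\setminus S}$ a rank-one group at the middle root --- for which neither subgroup is one to which Gelfand--Tsetlin bases are adapted, and a direct branching estimate of the type you sketch is not visibly tractable.

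The idea you are missing is an \emph{induction on $\#(S\cup T)$} that collapses the general case to a \emph{single} pair, namely $S=\Sigma\setminus\{\alpha_n\}$ and $T=\Sigma\setminus\{\alpha_1\}$. (These overlap for $n\geq4$, so your reduction to disjoint $S,T$ in fact steers away from the tractable target.) For that one pair, $\KS$ and $\KT$ are the upper-left and lower-right $\mathrm{U}(n{-}1)$ blocks, conjugate by the longest Weyl element; the $\KT$-fixed vector in each irreducible is, after conjugation, a single Gelfand--Tsetlin basis vector, and its inner products with the weight-zero $\KS$-isotypical vectors can be computed explicitly from the Gelfand--Tsetlin formulas and shown to decay (Lemma~\ref{base-case}). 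The inductive step then handles arbitrary $S,T$ with $S\cup T=\Sigma$ by stripping off an end root, applying the inductive hypothesis inside $\Lie{K}_{\Sigma\setminus\{\alpha_1\}}$ and $\Lie{K}_{\Sigma\setminus\{\alpha_n\}}$ to produce finite $\Lie{K}_{\Sigma\setminus\{\alpha_i\}}$-type truncations, and finally invoking the base case on the product of those truncations. The leverage comes not from restricting $\sigma$ but from restricting the pair $(S,T)$.
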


 \begin{proof}
 
 We work inductively on the size of $S\cup T$.  If $\#(S\cup T) =0$ or $1$, the result is immediate from Lemma \ref{ordering}.  So let $\#(S\cup T) =n$, and suppose we have proven the proposition for any lesser cardinalities.

Some preliminary remarks are needed.  Suppose $S\cup T\subsetneqq\Sigma$.  Decompose the Lie algebra $\lie{k}_{S\cup T}$ as
$$
 \lie{k}_{S\cup T} = \lie{z} \oplus \lie{k}',
$$
where $\lie{z}$ is the centre of $\lie{k}_{S\cup T}$ and $\lie{k}'$ is its orthogonal complement with respect to the Killing form.  Denote the corresponding connected subgroups by $\Lie{Z}$ and $\Lie{K}'$.  The group $\Lie{K}'$ is itself semisimple, and its Dynkin diagram is $S\cup T$ (with edges restricted from $\Sigma$).  For example, if $\Lie{K} = \SU(5)$ with $\Sigma = \{\alpha_1,\alpha_2,\alpha_3,\alpha_4\}$ then for $S\cup T=\{\alpha_1, \alpha_2\}$, we have
\begin{eqnarray*}
  \renewcommand{\baselinestretch}{1}\small\normalsize
     \Lie{K}' &=&  \left( \begin{array}{cccc}
        &&& \\ 
        \multicolumn{2}{c}{\raisebox{1.5ex}[0pt]{$\SU(3)$}} & \multicolumn{2}{c}{\raisebox{1.5ex}[0pt]{$ $}} \\ 
        && 1 \\
        \multicolumn{2}{c}{\raisebox{1.5ex}[0pt]{$ $}} && 1
       \end{array} \right) , \\
    \Lie{Z} &=& \left\{ \left( \begin{array}{cccc}
        &&& \\ 
        \multicolumn{2}{c}{\raisebox{1.5ex}[0pt]{$z_1I$}} & \multicolumn{2}{c}{\raisebox{1.5ex}[0pt]{$ $}} \\ 
        && \,\,z_2 \\
        \multicolumn{2}{c}{\raisebox{1.5ex}[0pt]{$ $}} && \!\!z_3 
       \end{array} \right) :\: z_1,z_2,z_3\in S^1,\: z_1^3z_2z_3=1 \right\}.
  \renewcommand{\baselinestretch}{2}\small\normalsize
\end{eqnarray*} 

By Schur's Lemma, any irreducible representation $\pi\in\irrep{K}_{S\cup T}$ is scalar on $\Lie{Z}$.  Moreover, if $\sigma\in\KShat$ and $\tau\in\KThat$ occur with nontrivial multiplicity in $\pi$, then they must agree with $\pi$ on $\Lie{Z}$.  Therefore, in checking the Property {\em(ii)} of Lemma \ref{asymptotic-orthogonality}, it suffices to consider the subgroups $\Lie{K}'$, $\KS\cap\Lie{K}'$ and $\KT\cap\Lie{K}'$ in place of $\Lie{K}_{S\cup T}$, $\KS$ and $\KT$.  We therefore assume that $S\cup T = \Sigma$.
 
We start with the case $\Lie{K}=\SU(n+1)$, with Dynkin diagram 
$$
\xymatrix@R=0pt{
  \alpha_1 & \alpha_2 && \alpha_{n} \\
   \bullet \ar@{-}[r] & \bullet \ar@{-}[r] &\cdots  \ar@{-}[r] & \bullet
  }
$$

{\em Case I}:  $S=\Sigma\setminus\{\alpha_n\}$, $T=\Sigma\setminus\{\alpha_1\}$.

This case is the computational heart of the theorem. The proof is somewhat technical so we separate it out as Lemma \ref{base-case}.
 
{\em Case II}: $S$, $T$ arbitrary with $S\cup T = \Sigma$.

Without loss of generality, suppose $\alpha_1\in S$ (otherwise interchange $S$ and $T$).  Let $S'= S\setminus\{\alpha_1\}$, $T'=T\setminus\{\alpha_1\}$.  Let $\sigma|S'$ denote the finite set of irreducible representations in $\irrep{K}_{S'}$ which occur in the restriction of $\sigma$ to $\Lie{K}_{S'}$, and similarly define $\tau|T' \subseteq \irrep{K}_{T'}$.  Then
$$
  p_\sigma p_\tau = p_\sigma P_{\sigma|S'} P_{\tau|T'} p_\tau.
$$
Now $P_{\sigma|S'} P_{\tau|T'} \in \scrK[S'\cup T'] = \scrK[\Sigma\setminus\{\alpha_1\}]$ by the inductive hypothesis.  Thus, for any $\epsilon>0$, there is a finite set $F_1\subseteq \irrep{K}_{\Sigma\setminus\{\alpha_1\}}$ such that
\begin{equation}
\label{approx1}
  \| \; p_\sigma p_\tau \:-\: p_\sigma P_{F_1} P_{\sigma|S'} P_{\tau|T'} p_\tau \: \| <\epsilon.
\end{equation}

Next consider the product $p_\sigma P_{F_1}$.   Let $S'' = S\setminus \{\alpha_n\}$ and $T'' = \Sigma\setminus\{\alpha_1,\alpha_n\}$.  As above, we let $\sigma|S'' $ denote the finite set of irreducible representations occurring in the restriction of $\sigma$ to $\Lie{K}_{S''}$, and let ${F_1}|T''$ denote the finite set of irreducible representations of $\Lie{K}_{T''}$ which occur in the restriction of any $\rho\in {F_1}$ to $T''$.  Then
$p_\sigma P_{F_1} = p_\sigma P_{\sigma|S''} P_{F_1|T''} P_{F_1}$.
Again, the inductive assumption implies $P_{\sigma|S''} P_{F_1|T''} \in \scrK[S''\cup T''] = \scrK[\Sigma\setminus\{\alpha_n\}]$, so for some finite set $F_2\subseteq\irrep{K}_{\Sigma\setminus\{\alpha_n\}}$, we have
\begin{equation}
\label{approx2}
 \|\;  p_\sigma P_{F_1} \:-\: p_\sigma P_{\sigma|S''} P_{F_1|T''} P_{F_2} P_{F_1} \:\| < \epsilon.
\end{equation}
Combining the approximations \eref{approx1} and \eref{approx2} yields
$$
 \|\; p_\sigma p_\tau \:-\:  p_\sigma P_{\sigma|S''} P_{F_1|T''} (P_{F_2} P_{F_1}) P_{\sigma|S'} P_{\tau|T'} p_\tau \:\| < 2\epsilon.
$$
But $P_{F_2} P_{F_1} \in \scrK[\Sigma]$ by Case I, and all the other projections are in $\scrA[\Sigma]$ by Lemma \ref{projections-in-AS}.  Since $\epsilon$ was arbitary, we conclude that $p_\tau p_\sigma \in \scrK[\Sigma]$.
 
 \medskip
 
Finally, we deal with the case where the Dynkin diagram of $\Sigma$ is not connected.  Let $\Sigma = \bigsqcup_{i=1}^N \Sigma_i$ be the decomposition of $\Sigma$ into connected components, which corresponds to a decomposition of $\Lie{K}$ as a product of special unitary groups 
$\Lie{K} =\prod_i \Ki$.  Irreducible representations of $\Lie{K}$ are of the form $\bigotimes_i \pi_i$, where $\pi_i\in\Kihat$.

Put $S_i=S\cup\Sigma_i$, $T_i=T\cup\Sigma_i$.  Then $\KS = \prod_i \Ki_{S_i}$, where $\Ki_{S_i}$ is the subgroup of $\Ki$ associated to the set of simple roots $S_i\subseteq\Sigma_i$.  For $\sigma\in\KShat$, we have a corresponding decomposition $\sigma=\bigotimes_i \sigma_i$, with $\sigma_i\in\Kihat_{S_i}$.  We also get $p_\sigma = \prod_i p_{\sigma_i}$.  Similarly, for $\tau\in\KThat$ we have $p_\tau = \prod_i p_{\tau_i}$, with all the analogous notation.  Since $p_{\sigma_i}$ and $p_{\tau_j}$ commute for $i\neq j$, 
$$
 p_\sigma p_\tau = \prod_i p_{\sigma_i} p_{\tau_i}.
$$
By the preceding cases, $p_{\sigma_i} p_{\tau_i} \in \scrK[\Sigma_i]$, so for any $\epsilon>0$, we can find a finite set $F_i \subseteq \Kihat$ such that $\|p_{\sigma_i} p_{\tau_i} -p_{\sigma_i} p_{\tau_i} P_{F_i}\|<\epsilon/N$, and therefore
$$
  \| p_{\sigma} p_{\tau} - p_{\sigma} p_{\tau} ( \prod_{i=1}^N P_{F_i} )  \| < \epsilon.
$$
Since $\prod_i P_{F_i} = P_F$, where $F = \{ \bigotimes_i \pi_i \st \pi_i\in F_i \} \subseteq \Khat_\Sigma$ and $P_F$ commutes with all other projections, we see that $p_\sigma p_\tau\in\scrK[\Sigma]$.

\end{proof}

 \begin{lemma}
 \label{base-case}
 Inside $\Lie{K}=\SU(n)$ ($n\geq3$), 
 %, with Dynkin diagram
 %$$
  % \xymatrix@R=0pt{
   %  \alpha_1 & \alpha_2 && \alpha_{n-1} \\
  %   \bullet \ar@{-}[r] & \bullet \ar@{-}[r] &\cdots  \ar@{-}[r] & \bullet
 %  }
 %$$
 let 
 \begin{eqnarray*}
   \renewcommand{\baselinestretch}{1}
     \KS &=&  \left\{ \left( \begin{array}{cccc}
        &&&0\\
        &A&&\vdots \\
        &&&0\\
        0&\cdots&0&z
       \end{array} \right)\; : \;A\in\mathrm{U}(n-1) ,\: z\in S^1,\: z(\det A) = 1 \right\},\\
  \KT &=&  \left\{ \left( \begin{array}{cccc}
        z&0&\cdots&0\\
        0&&& \\
        \vdots&&A&\\
        0&&&
       \end{array} \right) \; : \;A\in\mathrm{U}(n-1) ,\: z\in S^1, \:z(\det A) = 1 \right\},\\
  \renewcommand{\baselinestretch}{2}
\end{eqnarray*}
Let  $\sigma\in\KShat$ and $\epsilon>0$.  There are only finitely many irreducible representations $\pi$ of $\SU(n)$ which contain a unit vector $\xi\in p_\sigma V^\pi$ and $\eta\in p_\trivT V^\pi$ with $|\ip{\eta, \xi}|>\epsilon$.

\end{lemma}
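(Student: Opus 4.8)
The plan is to reduce the assertion, via the Gelfand pair $(\SU(n),\KT)$, to an elementary decay estimate for the zonal spherical functions of complex projective space.

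First, $\KT$ is precisely the stabiliser in $\SU(n)$ of the line $\CC e_1$, so $\SU(n)/\KT$ is the projective space $\mathbb{P}(\CC^n)$; this is a rank-one compact symmetric space, so $(\SU(n),\KT)$ is a Gelfand pair and each $\pi\in\irrep{K}$ contains a space of $\KT$-fixed vectors of dimension at most one. Hence only the \emph{spherical} representations --- those with a nonzero $\KT$-fixed vector --- can contribute, each carrying a unit $\KT$-fixed vector $\eta_\pi$, unique up to a phase. For a unit vector $\xi\in p_\sigma V^\pi$ we have $|\ip{\eta_\pi,\xi}|=|\ip{p_\sigma\eta_\pi,\xi}|\le\|p_\sigma\eta_\pi\|$; since $\SU(n)$ has only finitely many irreducibles of each dimension, it therefore suffices to fix $\sigma\in\KShat$ and show that $\|p_\sigma\eta_\pi\|\to0$ as $\dim\pi\to\infty$ through the spherical representations.

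Next I would compute, using the integral formula \eref{integral-formula},
\[
\|p_\sigma\eta_\pi\|^2=\ip{\eta_\pi,p_\sigma\eta_\pi}=\dim\sigma\int_{\KS}\overline{\chi_\sigma(h)}\,\ip{\eta_\pi,\pi(h)\eta_\pi}\,dh .
\]
Because $\eta_\pi$ is $\KT$-fixed, $g\mapsto\ip{\eta_\pi,\pi(g)\eta_\pi}$ is $\KT$-bi-invariant; and the $\KT$-double cosets of $\SU(n)$ are parametrised by $s=|g_{11}|^2\in[0,1]$ (the squared cosine of the Fubini--Study distance from the $\KT$-fixed point to its image under $g$). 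So this function has the form $R_\pi(|g_{11}|^2)$ with $|R_\pi|\le1$ (Cauchy--Schwarz, $\|\eta_\pi\|=1$). Writing $W(s)=(n-1)(1-s)^{n-2}$ for the density of $|g_{11}|^2$ under normalised Haar measure on $\SU(n)$ (the first column being uniform on the unit sphere of $\CC^n$, so $\int_0^1W\,ds=1$), Schur orthogonality gives
\[
\int_0^1|R_\pi(s)|^2\,W(s)\,ds=\int_{\SU(n)}\big|\ip{\eta_\pi,\pi(g)\eta_\pi}\big|^2\,dg=\frac1{\dim\pi}\;\longrightarrow\;0 .
\]
For $h\in\KS$ the entry $h_{11}$ is the $(1,1)$-entry of the $\mathrm{U}(n-1)$-block of $h$; identifying $\KS$ with $\mathrm{U}(n-1)$ via $A\mapsto\operatorname{diag}(A,(\det A)^{-1})$ and writing $w(s)=(n-2)(1-s)^{n-3}$ for the density of $|A_{11}|^2$ under Haar measure on $\mathrm{U}(n-1)$, the displayed integral becomes
\[
\|p_\sigma\eta_\pi\|^2=\dim\sigma\int_{\mathrm{U}(n-1)}\overline{\chi_\sigma(A)}\,R_\pi(|A_{11}|^2)\,dA\;\le\;(\dim\sigma)^2\int_0^1|R_\pi(s)|\,w(s)\,ds .
\]

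It remains to deduce $\int_0^1|R_\pi(s)|\,w(s)\,ds\to0$ from the two facts $|R_\pi|\le1$ and $\int_0^1|R_\pi|^2W(s)\,ds\to0$; this is done by splitting the interval. Fix $\delta\in(0,1)$. On $[0,1-\delta]$ one has $(1-s)^{n-3}\le\delta^{-1}(1-s)^{n-2}$, so $w(s)\le\delta^{-1}W(s)$ there, whence by Cauchy--Schwarz
\[
\int_0^{1-\delta}|R_\pi(s)|\,w(s)\,ds\le\frac1\delta\int_0^1|R_\pi(s)|\,W(s)\,ds\le\frac1\delta\Big(\int_0^1|R_\pi|^2W\,ds\Big)^{\!1/2}\Big(\int_0^1W\,ds\Big)^{\!1/2}\;\longrightarrow\;0 ,
\]
while on $[1-\delta,1]$, using $|R_\pi|\le1$, $\int_{1-\delta}^1|R_\pi(s)|\,w(s)\,ds\le\int_{1-\delta}^1 w(s)\,ds=\delta^{n-2}$. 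Since $\delta$ is arbitrary, $\int_0^1|R_\pi|\,w\,ds\to0$, and hence $\|p_\sigma\eta_\pi\|\to0$, which finishes the argument. I expect the set-up in the first three paragraphs --- recognising that the whole statement collapses to this $L^1$-decay of the spherical functions --- to be where the real content lies; the estimate itself is routine, the one genuine subtlety being that it must survive the case $n=3$, where the exponent $n-3$ vanishes and a single Cauchy--Schwarz applied directly no longer closes the gap, so the interval-splitting above is really needed.
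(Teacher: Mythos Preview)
Your argument is correct, and it takes a genuinely different route from the paper's proof.

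The paper proves this lemma by brute force with Gelfand--Tsetlin bases: it identifies the $\KT$-fixed vector in each spherical representation $\pi_\lambda$ (with $\lambda=(m,0,\ldots,0,-m)$), expands it in the GT basis adapted to the chain $\glx(1)\subset\cdots\subset\glx(n)$, and computes the overlaps with the zero-weight GT vectors explicitly via the \v{Z}elobenko formulae. This yields an exact formula
\[
\left|\bigip{\eta_m,\frac{\xi_{\Lambda(M)}}{\|\xi_{\Lambda(M)}\|}}\right|=\frac{1}{\binom{m+n-2}{n-2}}\left(\frac{\prod_{k=2}^{n-1}(2m_k+k-1)}{(n-2)!}\right)^{1/2},
\]
from which the decay is read off. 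The computation is long and requires a separate combinatorial identity.

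Your approach bypasses all of this by recognising that $(\SU(n),\KT)$ is a rank-one Gelfand pair, so the entire question reduces to the decay of $\|p_\sigma\eta_\pi\|$, which you control by the single soft input $\int|R_\pi|^2W=(\dim\pi)^{-1}$ from Schur orthogonality, together with the elementary comparison $w(s)\le\delta^{-1}W(s)$ on $[0,1-\delta]$ between the pushforward densities of $|g_{11}|^2$ from $\SU(n)$ and from $\mathrm{U}(n-1)$. This is precisely the ``more geometric'' argument the paper explicitly wishes for after its own proof. What the paper's approach buys is an explicit rate (polynomial in $m$ for fixed $\sigma$); what yours buys is conceptual clarity, a proof of a few lines rather than a few pages, and a template that has some hope of extending to the base cases needed for other types, where the relevant pair $(\Lie{K},\Lie{K}_{\Sigma\setminus\{\alpha\}})$ is again a compact symmetric pair.

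Two small remarks. First, the phrase ``since $\SU(n)$ has only finitely many irreducibles of each dimension'' is true but unnecessary: the spherical representations already form a single sequence indexed by $m\ge0$, so it suffices to say $\|p_\sigma\eta_{\pi_m}\|\to0$ as $m\to\infty$. Second, your observation that the $n=3$ case forces the interval-splitting (since then $w\equiv1$ while $W(s)=2(1-s)$, and a direct Cauchy--Schwarz against $w$ would lose the factor needed near $s=1$) is exactly right; this is the only place in the argument where any care is needed.
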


The proof is a computation using Gelfand-Tsetlin bases for irreducible representations for $\SU(n)$.  We provide a quick review of this material here, which we take from the expository article of Molev \cite{Molev}.

By Weyl's unitary trick, the irreducible unitary representations of $\SU(n)$ are in correspondence with  irreducible $\CC$-linear representations of its complexified Lie algebra $\slx(n,\CC)$.  One begins by considering irreducible representations of $\glx(n,\CC)$.  The weights of $\glx(n,\CC)$ are indexed by $n$-tuples of integers, which act on the Cartan (diagonal) subalgebra by the formula
$$
  \lambda = (\lambda_1,\ldots,\lambda_n) :  \renewcommand{\baselinestretch}{1}
    \left( \begin{array}{ccc} 
     t_1 && \\ &\ddots& \\ && t_n
    \end{array} \right) \renewcommand{\baselinestretch}{2}
    \mapsto \sum_i \mu_i t_i.
$$
A weight is dominant if the entries are descending:  $\lambda_1\geq\cdots\geq \lambda_n$.  Such $n$-tuples are the highest weights of irreducible $\glx(n,\CC)$-representations.

Let $\pi_\lambda$ be the irreducible representation of $\glx(n,\CC)$ with highest weight $\lambda$.  One now considers the successive restrictions of this representation to the `upper-left' subalgebras $\lie{g}_n \supseteq \lie{g}_{n-1} \supseteq \cdots \supseteq \lie{g}_1$, where
$$
  \lie{g}_k = 
 \renewcommand{\baselinestretch}{1}
    \left( \begin{array}{ccc} 
    \\ 
    \multicolumn{2}{c}{\raisebox{1.5ex}[0pt]{$\glx(k,\CC)$}} \\
    && I 
 \end{array} \right) \renewcommand{\baselinestretch}{2}.
$$
The irreducible representations of $\lie{g}_{n-1}$ occurring in $\pi_\lambda$ are those whose highest weights $(\mu_1,\ldots,\mu_{n-1})$ satisfy the interlacing conditions
$$
   \lambda_i \geq \mu_i \geq \lambda_{i+1}   \qquad (i = 1,\ldots, n-1),
$$
and these representations each occur with multiplicity one.  Thus, a successive restriction down to $\lie{g}_1$ is specified uniquely by the rows of a {\em Gelfand-Tsetlin pattern}
$$ \Lambda = 
  \left( \begin{array}{cccccccccc}
  \multicolumn{2}{c}{\lambda_{n,1}~} &
  \multicolumn{2}{c}{~\lambda_{n,2}} &
  \multicolumn{2}{c}{\cdots\cdots\cdots} &
  \multicolumn{2}{c}{~\lambda_{n,n-1}~} &
  \multicolumn{2}{c}{~\lambda_{n,n}} \\
&\multicolumn{2}{c}{\lambda_{n-1,1}}
&\multicolumn{4}{c}{\lambda_{n-1,2} \cdots \lambda_{n-1,n-2}}
&\multicolumn{2}{c}{\!\lambda_{n-1,n-1}} \\
&&\multicolumn{2}{c}{\ddots}
&&&\multicolumn{2}{c}{\adots}\\
&&&\multicolumn{2}{c}{\lambda_{2,1}}
&\multicolumn{2}{c}{\lambda_{2,2}} \\
&&&&\multicolumn{2}{c}{\lambda_{1,1}}
\end{array} \right),
$$
satisfying
\begin{equation}
\label{interlacing}
 \lambda_{k+1,i} \geq \lambda_{k,i} \geq \lambda_{k+1,i+1} ,  \qquad (i = 1,\ldots, k-1; ~ k=1,\ldots n-1).
\end{equation}
Here, $(\lambda_{k,1},\ldots,\lambda_{k,k})$ is the highest weight of the $\glx(k,\CC)$-subrepresentation.

The resulting irreducible representations of $\lie{g}_1 \cong \glx(1,\CC)$ are one-dimensional, so choosing a nonzero vector from each will define a basis for the representation space of $\pi_\lambda$.  There is a standard choice due to \v{Z}elobenko (based on Gelfand and Tsetlin \cite{GTs}), and  we denote these basis vectors by $\xi_\Lambda$.  (We also follow the notational convention that if $\Lambda$ is an inadmissible pattern, that is it does not satisfy the interlacing conditions \eref{interlacing}, then $\xi_\Lambda=0$.)  This basis is orthogonal, but not orthonormal.  Putting $l_{k,i} = \lambda_{k,i}-i+1$, the norm of $\xi_\Lambda$ is given by
\begin{equation}
\label{norm}
  \| \xi_\Lambda \|^2 = \prod_{k=2}^n \prod_{1\leq i \leq j < k} \frac{(l_{k,i} - l_{k-1,j})!}{(l_{k-1,i} - l_{k-1,j})!}
                   \prod_{1\leq i < j \leq k} \frac{ (l_{k,i} - l_{k,j} -1)!}{(l_{k-1,i}-l_{k,j}-1)!}.
\end{equation}

The representation $\pi_\lambda$ of $\glx(n,\CC)$ is described explicitly in this basis as follows.  Let $E_{p,q}$ be the $n\times n$-matrix with all entries zero except for a $1$ in the $(p,q)$-position.   Then
\begin{eqnarray}
  \pi_\mu(E_{k,k}) \xi_{\Lambda} &=& \left( \sum_{i=1}^k \lambda_{k,i} - \sum_{i=1}^{k-1} \lambda_{k-1,i} \right) \xi_{\Lambda},
    \label{weight-operator} \\
  \pi_\mu(E_{k,k+1}) \xi_{\Lambda} &=& - \sum_{i=1}^k 
    \frac{(l_{k,i}-l_{k+1,1})\cdots(l_{k,i}-l_{k+1,k+1})}
            {(l_{k,i}-l_{k,1})\cdots\wedge\cdots(l_{k,i}-l_{k,k})}  \;\xi_{\Lambda+\delta_{k,i}},
    \label{raising-operator} \\
  \pi_\mu(E_{k+1,k}) \xi_{\Lambda} &=&  \sum_{i=1}^k 
    \frac{(l_{k,i}-l_{k-1,1})\cdots(l_{k,i}-l_{k-1,k-1})}
            {(l_{k,i}-l_{k,1})\cdots\wedge\cdots(l_{k,i}-l_{k,k})}  \;\xi_{\Lambda-\delta_{k,i}},
    \label{lowering-operator} 
\end{eqnarray}
where $\Lambda\pm\delta_{i,j}$ is the Gelfand-Tsetlin pattern obtained by adding $\pm1$ to the entry $\lambda_{i,j}$ of $\Lambda$, and the symbol $\wedge$ indicates that the zero term in the denominator should be omitted.  In particular, the Gelfand-Tsetlin vector $\xi_{\Lambda}$ is a weight vector with weight 
\begin{equation}
\label{weight}
  (s_1-s_0,s_2-s_1\ldots,s_n-s_{n-1}),
\end{equation}
where $s_k = \sum_{i=1}^k \lambda_{k,i}$ is the sum of the entries in the $k$th row, and $s_0=0$ by convention.

Now restrict the representations $\pi_\lambda$ to $\slx(n,\CC)$.  Two weights $\lambda$ and $\lambda'$ of $\glx(n,\CC)$ become equal for $\slx(n,\CC)$ iff their difference is a multiple of the trace $(1,\ldots,1)$.  Gelfand-Tsetlin patterns $\Lambda$ and $\Lambda'$ define the same basis vector if they differ by the same constant in each entry.   (Note that the Gelfand-Tsetlin formulae for $\pi_\lambda$ above are unaffected by such equivalences, although the first formula must be applied to the differences $E_{k,k} - E_{k+1,k+1} \in\slx(n,\CC)$.)

\begin{proof}[Proof of Lemma \ref{base-case}]

Suppose $\eta\in V^\pi$ is a unit vector fixed by the subgroup $\KT$.  
Let $\eta'=\pi(w)\eta$, where 
$$
 w = \left( \begin{array}{ccc}
        && 1 \\ & \adots \\ 1
       \end{array} \right)
$$
Then $\eta'$ is fixed by $\pi(w) \,\KT\, \pi(w^{-1}) = \KS$, and hence is annihilated by the complexified Lie algebra $(\lie{k}_S)_\CC$. Note that $(\lie{k}_S)_\CC$ contains the upper-left subalgebra $\lie{g}_{n-1}$, so $\eta'$ is a multiple of a Gelfand-Tsetlin vector $\xi_{\Lambda}$ with all entries of $\Lambda$ below the top row being zero  (modulo addition of a constant in each entry).     In view of the interlacing conditions \eref{interlacing}, we conclude that
$$\Lambda=
    \left( \begin{array}{cccccccccc}
  \multicolumn{2}{c}{m} &
  \multicolumn{2}{c}{0} &
  \multicolumn{2}{c}{\cdots} &
  \multicolumn{2}{c}{0} &
  \multicolumn{2}{c}{\!\!-m'} \\
&\multicolumn{2}{c}{\;0\;}
&\multicolumn{4}{c}{\;0\,\cdots\,0\;}
&\multicolumn{2}{c}{\;0\;} \\
&&\multicolumn{2}{c}{\ddots}
&&&\multicolumn{2}{c}{\adots}\\
&&&\multicolumn{2}{c}{\;0}
&\multicolumn{2}{c}{0} \\
&&&&\multicolumn{2}{c}{0}
\end{array} \right)
$$
for some $m,m'\geq0$.
Moreover, $\eta'$ is of weight zero, since $(\lie{k}_S)_\CC$ contains the Cartan subalgebra $\lie{h}=\lie{t}_\CC$, so by \eref{weight}, $m=m'$.  In particular, the representation $\pi$ has highest weight of the form $\lambda = (m,0,\ldots,0,-m)$.

With $\pi=\pi_\lambda$ thus specified, let $\eta_m$ be the $\KT$-fixed unit vector for $\pi_\lambda$.  It has weight zero and is annihilated by $\pi_\lambda(E_{k,k+1})$ for $k=2,\ldots,n-1$.  The zero-weight space of $\pi_\lambda$ is spanned by the Gelfand-Tsetlin vectors for patterns with zero row-sums,
\begin{equation}
\label{inductive-GTs-pattern}
   \left( \begin{array}{cccccccccc}
  \multicolumn{2}{c}{m} &
  \multicolumn{2}{c}{0} &
  \multicolumn{2}{c}{\cdots} &
  \multicolumn{2}{c}{0} &
  \multicolumn{2}{c}{\;\;-m} \\
&\multicolumn{2}{c}{\!m_{n-1}\!\!\!}
&\multicolumn{4}{c}{\!\!\!\!\!\!\!\!0\cdots0\!\!}
&\multicolumn{2}{c}{\!\!\!\!\!-m_{n-1}} \\
&&\multicolumn{2}{c}{\ddots}
&&&\multicolumn{2}{c}{\adots}\\
&&&\multicolumn{2}{c}{m_2}
&\multicolumn{2}{c}{\!\!\!-m_2} \\
&&&&\multicolumn{2}{c}{0}
\end{array} \right)
\end{equation}
We will denote such a Gelfand-Tsetlin pattern by $\Lambda(M)$, where ${M}$ is the $n$-tuple $(m_n,m_{n-1},\ldots,m_2,m_1)$ with $m=m_n\geq m_{n-1} \geq \cdots \geq m_2\geq m_1=0$.

\begin{claim} 
\label{claim}
$\displaystyle \left|\bigip{\eta_m,\frac{\xi_{\Lambda(M)}}{\|\xi_{\Lambda(M)}\|}}\right| = \frac{1}{{{m+n-2} \choose {n-2}}}  \left( \frac{\prod_{k=2}^{n-1} (2m_k+k-1) } {{(n-2)!}} \right)^\half. $
\end{claim}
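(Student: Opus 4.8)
The plan is to expand $\eta_m$ in the Gelfand--Tsetlin basis of the zero-weight space of $\pi_\lambda$, determine the expansion coefficients up to one global scalar from the vanishing conditions $\pi_\lambda(E_{k,k+1})\eta_m=0$ ($k=2,\dots,n-1$), read off the Gelfand--Tsetlin norms from \eref{norm}, and fix the scalar by normalising. Write $\eta_m=\sum_M c_M\,\xi_{\Lambda(M)}$, with $M=(m_n,\dots,m_1)$, $m=m_n\ge m_{n-1}\ge\cdots\ge m_2\ge m_1=0$. Since the vectors $\xi_{\Lambda(M)}$ are pairwise orthogonal,
\[
 \left|\bigip{\eta_m,\tfrac{\xi_{\Lambda(M)}}{\|\xi_{\Lambda(M)}\|}}\right|=|c_M|\,\|\xi_{\Lambda(M)}\|,\qquad \sum_M|c_M|^2\,\|\xi_{\Lambda(M)}\|^2=1,
\]
so everything reduces to computing $|c_M|$ and $\|\xi_{\Lambda(M)}\|$.

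To get the coefficients, I would apply $\pi_\lambda(E_{k,k+1})$ to each $\xi_{\Lambda(M)}$ via \eref{raising-operator}. The interlacing conditions \eref{interlacing} kill all but two summands, namely those raising the first or the last entry of row $k$; both surviving vectors have all rows other than row $k$ in the standard shape, so grouping the equation $\pi_\lambda(E_{k,k+1})\eta_m=0$ by basis vector produces, for each admissible $p$, a two-term relation between $c$ at $m_k=p$ and at $m_k=p-1$ (the other entries of $M$ fixed). Simplifying the two rational coefficients of \eref{raising-operator}, the neighbouring entries $m_{k\pm1}$ cancel, leaving
\[
 \frac{c_{M,\,m_k=p}}{c_{M,\,m_k=p-1}}=-\,\frac{(p+k-2)(2p+k-1)}{p(2p+k-3)}.
\]
Because the right-hand side depends only on $p$ and $k$, these relations are mutually consistent and force
\[
 c_M=C\prod_{k=2}^{n-1}f_k(m_k),\qquad f_k(p)=(-1)^p\binom{p+k-2}{p}\,\frac{2p+k-1}{k-1}\quad(f_k(0)=1),
\]
for a single constant $C$.

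It remains to evaluate $\|\xi_{\Lambda(M)}\|^2$ from \eref{norm} and to pin down $|C|$. Each row-$k$ factor of \eref{norm} involves only the entries of rows $k$ and $k-1$ of $\Lambda(M)$; substituting the explicit entries and collecting factorials, the $m_{k-1}$-dependence telescopes against the $(k{-}1)$-st factor, giving a product form $\|\xi_{\Lambda(M)}\|^2=N_{m,n}\prod_{k=2}^{n-1}\binom{m_k+k-2}{m_k}^{-2}(2m_k+k-1)^{-1}$ with $N_{m,n}$ depending only on $m$ and $n$. Inserting the two product formulas into the normalisation constraint gives
\[
 1=|C|^2 N_{m,n}\,\frac{1}{((n-2)!)^2}\sum_{0\le m_2\le\cdots\le m_{n-1}\le m}\ \prod_{k=2}^{n-1}(2m_k+k-1),
\]
and the last sum equals $(n-2)!\binom{m+n-2}{n-2}^2$ --- an identity I would settle by induction on $n$ using the telescoping identity $(2p+j)\binom{p+j-1}{j-1}^2=j\bigl(\binom{p+j}{j}^2-\binom{p+j-1}{j}^2\bigr)$ with $j=n-2$. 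This yields $|C|\sqrt{N_{m,n}}=\sqrt{(n-2)!}\,/\binom{m+n-2}{n-2}$, and substituting into $|c_M|\,\|\xi_{\Lambda(M)}\|=|C|\sqrt{N_{m,n}}\prod_{k=2}^{n-1}\frac{(2m_k+k-1)^{1/2}}{k-1}$ gives exactly the claimed expression. I expect the third step to be the main obstacle: \eref{norm} is a cumbersome alternating product of factorial ratios, and the delicate point is organising the cancellations between consecutive rows carefully enough to reach the clean product form; the other steps are short verifications from the Gelfand--Tsetlin formulas together with the elementary summation identity.
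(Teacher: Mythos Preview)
Your proposal is correct and follows essentially the same route as the paper: expand $\eta_m$ in the Gelfand--Tsetlin basis, derive a two-term recursion from $\pi_\lambda(E_{k,k+1})\eta_m=0$ (the paper obtains the equivalent recursion $a_{M+e_k}=-\tfrac{(m_k+k-1)(2m_k+k+1)}{(m_k+1)(2m_k+k-1)}a_M$), evaluate $\|\xi_{\Lambda(M)}\|^2$ from \eref{norm} to the same product form, and fix the constant via the same combinatorial identity proved by induction. One small imprecision: in the recursion step only $m_{k+1}$ (not $m_{k-1}$) appears and cancels, since $E_{k,k+1}$ involves rows $k$ and $k{+}1$ only; this does not affect the argument.
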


The important point here is that for fixed values of $m_{n-1},\ldots,m_2$, these inner products tends to zero as $m\to\infty$.  From this, Lemma \ref{base-case} follows.  For suppose $\sigma$ is an irreducible representation of $\KS$.  If $\sigma$ does not have highest weight of the form $(q,0,\ldots,0,-q')$ then it occurs with zero multiplicity in the representations $\pi_\lambda$ which have $\KT$-fixed vectors, and if $q\neq q'$ then none of its vectors have zero weight, so it is orthogonal to $\eta_m$.  On the other hand, if $\sigma$ does have highest weight $(q,0,\ldots,0,-q)$, then any unit vector $\xi\in p_\sigma V^{\pi_\lambda}$ is a linear combination of the vectors $\xi_{M}$ with $m_{n-1} = q$.  There are at most $q^{n-2}$ such vectors, regardless of $m$, and Claim \ref{claim} shows that they are all asymptotically orthogonal to $\eta$ as $m\to\infty$.

Let us prove Claim \ref{claim}.  For $1<k<n$, let $M\pm e_k$ denote the $n$-tuple $(m_{n},\ldots, m_k\!\pm\!1, \ldots, m_1)$.  Recall that we use $\Lambda \pm \delta_{i,j}$ to denote the pattern obtained by adding $\pm1$ to the $(i,j)$-entry of $\Lambda$.  Note that, $\Lambda(M)+\delta_{k,j}$ does not satisfy the interlacing conditions \eref{interlacing} unless $j=1$ or $k$.  Note also that $\Lambda(M)+\delta_{k,k} = \Lambda(M-e_k)+\delta_{k,1}$.

Write $\eta_m$ in the Gelfand-Tsetlin basis for $\pi_\lambda$:
$$\eta_m = \sum_M a_M \xi_{\Lambda(M)}.$$
By formula \eref{raising-operator},
\begin{eqnarray*}
  \lefteqn{\pi_{\lambda}(E_{k,k+1}) \xi_{\Lambda(M)} } \\
      &=& -\frac{(m_k-m_{k+1}) \:\left(\prod_{j=1}^{k-1} (m_k+j)\right)\: (m_k+m_{k+1}+k)}
           { \:\left(\prod_{j=1}^{k-2} (m_k+j)\right)\: (2m_k+k-1)} \;\xi_{\Lambda(M)+\delta_{k,1}} \\
      && \quad -\frac{ (-m_k-m_{k+1}-k+1) \:\left(\prod_{j=0}^{k-2} (-m_k-j)\right)\: (-m_k+m_{k+1}+1)}
                                    {(-2m_k-k+1)\:\left(\prod_{j=1}^{k-2} (-m_k-j)\right)\: }  \;\xi_{\Lambda(M)+\delta_{k,k}}  \\
      &=& \frac{(m_{k+1}-m_k)(m_{k+1}+m_k+k)(m_k+k-1)}{(2m_k+k-1)} \;\xi_{\Lambda(M)+\delta_{k,1}} \\
      && \quad +\frac{(m_{k+1}-m_k+1)(m_{k+1}+m_k+k-1)\,m_k}{(2m_k+k-1)}\; \xi_{\Lambda(M-e_k)+\delta_{k,1}},
\end{eqnarray*}
for $k=2,\ldots,n-1$.  Comparing the coefficients of $\xi_{\Lambda(M)+\delta_{k,1}}$ in the equation
$$
  \pi_\lambda(E_{k,k+1})\, \eta_m = \sum_M a_M \pi_\lambda(E_{k,k+1})\xi_{\Lambda(M)} = 0,
$$
we see that
\begin{multline*}
  \frac{(m_{k+1}-m_k)(m_{k+1}+m_k+k)(m_k+k-1)}{(2m_k+k-1)}  \;a_M \\
    + \frac{(m_{k+1}-m_k)(m_{k+1}+m_k+k)(m_k+1)}{(2m_k+k+1)} \;a_{M+e_k} = 0,
\end{multline*}
so that
\begin{equation}
\label{inductive-reduction}
   a_{M+e_k} = -\frac{(m_k+k-1)}{(m_k+1)}\cdot \frac{(2m_k+k+1)}{(2m_k+k-1)} \, a_M,
\end{equation}
for $k=2,\ldots,n-1$.
We can use \eref{inductive-reduction} to reduce each of the entries $m_2,\ldots,m_{k-1}$ in turn, resulting in
\begin{eqnarray}
   a_M &=& \pm \left( \prod_{k=2}^{n-1} \prod_{i=0}^{m_k-1} \frac{(i+k-1)}{(i+1)} \cdot \frac{(2i+k+1)}{(2i+k-1)} \right) \, a_{(m,0,\ldots,0)} \nonumber \\
         &=& \pm \frac{1}{(n-2)!} \left(\prod_{k=2}^{n-1} \frac{(m_k+k-2)!}{m_k!} \, (2m_k+k-1)\right) \, a_{(m,0,\ldots,0)}. 
         \label{aM}
\end{eqnarray}

We now compute $\|\xi_{\Lambda(M)}\|$ by Equation \eref{norm}. This is straightforward but tedious.  The $k=2$ term in \eref{norm} is ${m_2!}\frac{(2m_2)!}{m_2!}=(2m_2)!$.  For $3\leq k \leq n$, the terms with $i=1$ give
\begin{multline*}
  \frac{(m_k-m_{k-1})! \:\left( \prod_{j=1}^{k-3} (m_k+j)!\right)\: (m_k+m_{k-1}+k-2)!} 
          {0! \:\left( \prod_{j=1}^{k-3} (m_{k-1}+j)! \right)\: (2m_{k-1}+k-2)!}  \\
\times  \frac{m_k! \:\left( \prod_{j=1}^{k-3} (m_k+j)! \right)\: (2m_k+k-2)!} 
          {m_{k-1}! \:\left( \prod_{j=1}^{k-3} (m_{k-1}+j)! \right)\: (m_k+m_{k-1}+k-2)!}  \\
    = (m_k-m_{k-1})! \cdot\left( \prod_{j=0}^{k-3} \frac{ (m_k+j)!}{(m_{k-1}+j)!} \right)^2 \cdot
       \frac{m_{k-1}!}{m_k!} \cdot  \frac{(2m_k+k-2)!}{(2m_{k-1}+k-2)!} ;
\end{multline*}
the terms with $ 1<i<k-1$ are all $1$; and the terms with $i=k-1$ give 
$$
 \frac{  m_{k-1}!\,m_k!}{(m_k-m_{k-1})!}.
$$
Thus,
\begin{eqnarray}
  \| \xi_{\Lambda(M)} \|^2 &=&
     \prod_{k=3}^n \left[ \left( \prod_{j=3}^{k} \frac{(m_k+j-3)!}{(m_{k-1}+j-3)!} \right)^2 \cdot
        {m_{k-1}!}^2 \cdot \frac{(2m_k+k-2)!}{(2m_{k-1}+k-2)!}  \right] \,(2m_2)! \nonumber \\
   &=& \left( \prod_{j=3}^n \prod_{k=j}^n \frac{(m_k+j-3)!}{(m_{k-1}+j-3)!} \right)^2 \cdot
               \left( \prod_{k=3}^n m_{k-1}! \right)^2 \cdot \nonumber \\
   && \hspace{2cm}            
               \left( \prod_{k=3}^n \frac{(2m_k+k-2)!}{(2m_{k-1}+k-3)!} \, \frac{1}{(2m_{k-1}+k-2)} \right)(2m_2)! \nonumber \\
   &=& \left( \prod_{j=3}^n \frac{(m_n+j-3)!}{(m_{j-1}+j-3)!} \right)^2 \cdot
              \left(\prod_{k=3}^{n} m_{k-1}! \right)^2 \nonumber \\
   && \hspace{2cm} 
               (2m_n+n-2)! \, \prod_{k=3}^{n} \frac{1}{(2m_{k-1}+k-2)} \nonumber \\
   &=& C(m)\, \prod_{k=2}^{n-1} \left(\frac{m_k!^2}{(m_k+k-2)!^2} \cdot \frac{1}{(2m_k+k-1)}\right)  ,
     \label{xiM}
\end{eqnarray}
where
$$
  C(m) = \left( \prod_{j=3}^{n} (m+j-3)! \right)^2 \, (2m+n-2)!.
$$
Combining \eref{aM} and \eref{xiM}, we have 
\begin{eqnarray*}
1 \;=\; \| \eta_m \|^2 
      &=& \sum_{M} |a_M|^2 \| \xi_{\Lambda(M)} \|^2 \\
      &=& \frac{ C(m)\,{a_{(m,0,\ldots,0)}}^2 }{(n-2)!^2} 
             \sum_{m\geq m_{n-1} \geq \cdots \atop \hspace{3ex} \cdots \geq m_3 \geq m_2 \geq 0} 
                    \prod_{k=2}^{n-1} (2m_k+k-1) 
\end{eqnarray*}
A combinatorial identity (Lemma \ref{combinatorial-identity} below) shows that this equals
$$
   \frac{ C(m)\,{a_{(m,0,\ldots,0)}}^2 }{(n-2)!^2} \; (n-2)! \, {m+n-2 \choose n-2}^2,
$$
so
$$
  { C(m)^{\half}\,{a_{(m,0,\ldots,0)}} }= \frac{(n-2)!^\half}{{{m+n-2} \choose {n-2}}}
$$
We therefore have
\begin{eqnarray*}
  \left| \bigip{\eta_m , \frac{\xi_{\Lambda(M)}}{\|\xi_{\Lambda(M)}\|} } \right|
    &=& a_M \| \xi_{\Lambda(M)} \| \\
    &=& \frac{ C(m)^{\half}\,{a_{(m,0,\ldots,0)}} }{(n-2)!} \prod_{k=2}^{n-1} (2m_k+k-1)^\half \\
    &=& \frac{1}{{{m+n-2} \choose {n-2}}}  \left( \frac{\prod_{k=2}^{n-1} (2m_k+k-1) } {{(n-2)!}} \right)^\half,
\end{eqnarray*}
as claimed.
\end{proof}

We needed the following combinatorial identity.
 
\begin{lemma}
\label{combinatorial-identity}

\begin{equation}
\label{identity}
\sum_{m\geq m_{n-1} \geq \cdots \atop \hspace{3ex} \cdots \geq m_3 \geq m_2 \geq 0} 
                     \prod_{k=2}^{n-1} (2m_k+k-1) = (n-2)! \, {m+n-2 \choose n-2}^2. 
\end{equation}

\end{lemma}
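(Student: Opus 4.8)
The plan is to prove \eref{identity} by induction on $n$. When $n=2$ both sides equal $1$ (the product on the left is empty and the chain of summation variables is empty; on the right $0!\,\binom{m}{0}^2=1$), which settles the base case. For the inductive step assume $n\ge3$ and that \eref{identity} holds with $n$ replaced by $n-1$, for every $m\ge0$.

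In the sum on the left of \eref{identity}, isolate the outermost variable $m_{n-1}$, which ranges over $0\le m_{n-1}\le m$. For each fixed value of $m_{n-1}$, the remaining summation of $\prod_{k=2}^{n-2}(2m_k+k-1)$ over $m_{n-1}\ge m_{n-2}\ge\cdots\ge m_2\ge0$ is exactly the left-hand side of \eref{identity} with $n$ replaced by $n-1$ and $m$ replaced by $m_{n-1}$, so by the induction hypothesis it equals $(n-3)!\,\binom{m_{n-1}+n-3}{n-3}^2$. Pulling out the $k=n-1$ factor $(2m_{n-1}+n-2)$, we obtain
\begin{equation*}
  \sum_{m\geq m_{n-1} \geq \cdots \geq m_2 \geq 0}\ \prod_{k=2}^{n-1}(2m_k+k-1)
  = (n-3)!\sum_{b=0}^{m}(2b+n-2)\binom{b+n-3}{n-3}^{2}.
\end{equation*}
Since $(n-3)!\,(n-2)=(n-2)!$, it remains to prove the one-variable identity
\begin{equation*}
  \sum_{b=0}^{m}(2b+n-2)\binom{b+n-3}{n-3}^{2}=(n-2)\binom{m+n-2}{n-2}^{2}.
\end{equation*}

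To prove this last identity I would exhibit it as a telescoping sum. Set $A_b=\binom{b+n-2}{n-2}$, $B_b=\binom{b+n-3}{n-2}$, and $C_b=\binom{b+n-3}{n-3}$; note that $B_b=A_{b-1}$ and, by Pascal's rule, $A_b-B_b=C_b$. Comparing factorials gives $B_b=\tfrac{b}{\,n-2\,}C_b$, hence $A_b=C_b+B_b=\tfrac{b+n-2}{\,n-2\,}C_b$ and therefore $A_b+B_b=\tfrac{2b+n-2}{\,n-2\,}C_b$. Consequently
\begin{equation*}
  (n-2)\bigl(A_b^{2}-A_{b-1}^{2}\bigr)=(n-2)(A_b-B_b)(A_b+B_b)=(n-2)\,C_b\cdot\frac{2b+n-2}{n-2}\,C_b=(2b+n-2)C_b^{2},
\end{equation*}
which is precisely the summand on the left. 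Summing over $0\le b\le m$ telescopes to $(n-2)(A_m^{2}-A_{-1}^{2})=(n-2)\binom{m+n-2}{n-2}^{2}$, because $A_{-1}=\binom{n-3}{n-2}=0$. This completes the induction.

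The argument is entirely elementary; the only mild obstacle is recognizing that $(n-2)\binom{b+n-3}{n-2}^{2}$ is the correct telescoping antiderivative for the summand (this is what Gosper's algorithm would produce automatically), after which the verification is just the short binomial computation above.
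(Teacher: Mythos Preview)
Your proof is correct and follows essentially the same route as the paper's. Both argue by induction on $n$, reducing to the one-variable identity $\sum_{b=0}^{m}(2b+p+1)\binom{b+p}{p}^{2}=(p+1)\binom{m+p+1}{p+1}^{2}$ (with $p=n-3$); the paper states this separately and proves it by induction on $m$, while your telescoping computation $(n-2)(A_b^2-A_{b-1}^2)=(2b+n-2)C_b^2$ is exactly that inductive step made explicit. (Minor quibble: in your closing remark the ``antiderivative'' should be $(n-2)\binom{b+n-2}{n-2}^2=(n-2)A_b^2$, not $(n-2)\binom{b+n-3}{n-2}^2$; the computation itself is fine.)
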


\begin{proof}

Firstly, the identity
\begin{equation}
\label{identity1}
  \sum_{i=0}^m (2i+p+1) {i+p \choose p}^2 = (p+1) {m+p+1 \choose p+1}^2
\end{equation}
is proven by induction on $m$.  Now equation \eref{identity} is proven by induction on $n$, as follows.  If $n=3$, then \eref{identity} is
$$
  \sum_{m_2=0}^m (2m_2+1) = (m+1)^2,
$$
which is just \eref{identity1} with $p=0$.  For $n>3$, write the left-hand side of \eref{identity} as
\begin{multline*}
  \sum_{m_{n-1}=0}^m \left( (2m_{n-1}+n-2) \cdot \hspace{-3ex}
    \sum_{m_{n-1} \geq m_{n-2} \geq \cdots \atop \hspace{3ex} \cdots \geq m_3 \geq m_2 \geq 0} 
                     \prod_{k=2}^{n-2} (2m_k+k-1) \right) \\
    =  \sum_{m_{n-1}=0}^m (2m_{n-1}+n-2)\, (n-3)! {m_{n-1}+n-3 \choose n-3}^2,
\end{multline*}
by the inductive hypothesis.  Applying \eref{identity1} with $p=n-3$, gives the result.

\end{proof}

The above proof is unquestionably very computational.  It would be extremely satisfying to have a proof of Proposition \ref{asymptotic-orthogonality-true} which is more geometric in nature, especially given the expected wide generality of the result, as suggested in Question \ref{conjecture}.

\begin{corollary}
\label{projections-in-JS}
 
Let $\Lie{K}$ be a product of special unitary groups.  For any $S\subseteq\Sigma$, the isotypical projections $p_\sigma$  (\,$\sigma\in\KShat$) are in $\scrJ[S]$.
 
\end{corollary}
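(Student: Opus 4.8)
The plan is to verify separately the two conditions defining $\scrJ[S] = \scrK[S]\cap\scrA$. Membership $p_\sigma\in\scrK[S]$ is immediate: with respect to the $\KS$-harmonic decomposition, the matrix entries of $p_\sigma$ are $p_\tau p_\sigma p_{\tau'} = \delta_{\sigma\tau}\delta_{\sigma\tau'}\,p_\sigma$, so $p_\sigma$ is $S$-harmonically finite. The real task is to show $p_\sigma\in\scrA=\bigcap_{T\subseteq\Sigma}\scrA[T]$. By Lemma \ref{projections-in-AS} this is already known whenever $T$ is comparable with $S$, so what remains is to treat an arbitrary $T\subseteq\Sigma$.

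For arbitrary $T$, I would use the characterization of $\scrA[T]$ as the algebra of two-sided multipliers of $\scrK[T]$ (Lemma \ref{AS-equivalent-defns}{\em(iii)}). Thus it suffices to check that $p_\sigma K\in\scrK[T]$ and $Kp_\sigma\in\scrK[T]$ for every $T$-harmonically finite operator $K$ composable on the appropriate side. Since $K=\sum_{\tau,\tau'\in\KThat} p_\tau K p_{\tau'}$ is a \emph{finite} sum, one has $p_\sigma K = \sum_{\tau,\tau'}(p_\sigma p_\tau)(Kp_{\tau'})$, so everything reduces to showing $p_\sigma p_\tau\in\scrK[T]$ for each $\tau\in\KThat$; the remaining factors $K$, $p_{\tau'}$ lie in $\scrA[T]$ and $\scrK[T]$ is a two-sided ideal in $\scrA[T]$, so each summand, and hence $p_\sigma K$, lands in $\scrK[T]$. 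The symmetric computation handles $Kp_\sigma$.

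The key input is Proposition \ref{asymptotic-orthogonality-true}, which gives $p_\tau p_\sigma\in\scrK[S\cup T]$ for $\sigma\in\KShat$, $\tau\in\KThat$. Since $T\subseteq S\cup T$, Lemma \ref{ordering} yields $p_\tau p_\sigma\in\scrK[T]$, and taking adjoints in the $C^*$-algebra $\scrK[T]$ gives $p_\sigma p_\tau=(p_\tau p_\sigma)^*\in\scrK[T]$, exactly what was needed. Combining, $p_\sigma\in\scrA[T]$ for all $T\subseteq\Sigma$, hence $p_\sigma\in\scrA$, and together with $p_\sigma\in\scrK[S]$ we conclude $p_\sigma\in\scrJ[S]$.

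I do not expect a genuine obstacle: all the analytic substance lives in Proposition \ref{asymptotic-orthogonality-true}, and this corollary is a short assembly of that result with the lattice lemmas. The only point requiring care is the reduction of ``$p_\sigma$ is a multiplier of $\scrK[T]$'' to the elementary products $p_\sigma p_\tau$, which relies on the finiteness of the $\KT$-harmonic matrix of $K$ together with the ideal property $\scrK[T]\triangleleft\scrA[T]$.
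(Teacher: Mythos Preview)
Your proof is correct and follows essentially the same route as the paper: both arguments use the multiplier characterization of $\scrA[T]$ (Lemma~\ref{AS-equivalent-defns}\emph{(iii)}), reduce to $T$-harmonically finite operators, and then invoke Proposition~\ref{asymptotic-orthogonality-true} together with Lemma~\ref{ordering} to see that $p_\sigma p_\tau\in\scrK[S\cup T]\subseteq\scrK[T]$. The only differences are cosmetic --- you spell out $p_\sigma\in\scrK[S]$ explicitly and decompose $K$ as $\sum_{\tau,\tau'} p_\tau K p_{\tau'}$, whereas the paper writes $K = P_F K$ for a single finite $F$ --- but the substance is identical.
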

 
\begin{proof}
 
We need that $p_\sigma\in\scrA[T]$ for any $T\subseteq \Sigma$.  Let $B$ be $T$-harmonically proper.  Then $B = P_F B$ for some finite set $F\subseteq\KThat$.  By Theorem \ref{asymptotic-orthogonality-true}, $p_\sigma P_F \in \scrK[S\cup T] \subseteq \scrK[T]$, so $p_\sigma B = (p_\sigma P_F) B \in \scrK[T]$.  This shows that $p_\sigma$ multiplies $\scrK[T]$ on the left.  A similar argument on the right shows that $p_\tau$ is a two-sided multiplier, and Lemma \ref{AS-equivalent-defns} applies.

\end{proof}

\begin{proof}[Proof of Theorem \ref{meet}]

Suppose $A\in\scrJ[S]$ and $B\in\scrJ[T]$.  Use Lemma \ref{KS-equivalent-defns} to approximate these by $AP_{F_1}$ and $P_{F_2} B$ for some finite sets of irreducibles $F_1\subseteq \KShat$ and $F_2\subseteq \KThat$.  Since $P_{F_1}P_{F_2}\in\scrK[S\cup T]$ and all of $A$, $B$, $P_{F_1}$ and $P_{F_2}$ are in $\scrA$, the result follows.

\end{proof}

 \section{Products of longitudinal pseudodifferential operators}
 
 \begin{lemma}
 \label{groupoid-algebra-in-JS}
 
Let $\Lie{K}$ be a product of special unitary groups.  Let $E$ be an equivariant vector bundle over $\scrX$ and let $S\subset\Sigma$.   For any $-p<0$, the longitudinal pseudodifferential operators $\Psi^{-p}_{\FS}(E)$ of order $-p$, tangent to the fibration $\scrX\to\XS$, are contained in $\scrJ[S]$.

\end{lemma}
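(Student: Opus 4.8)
The plan is to reduce the statement to Proposition~\ref{fibration-groupoid-algebra} together with Corollary~\ref{projections-in-JS}. Since $\scrJ[S]=\scrK[S]\cap\scrA$ is norm-closed and $\Psi^{-p}_{\FS}(E)\subseteq\overline{\Psi^{-p}_{\FS}}(E)=\overline{\Psi^{-\infty}_{\FS}}(E)$ for every $p>0$ (by the order-independence of the negative-order closure recalled in Section~\ref{longitudinal-PsiDO}), it suffices to show $\overline{\Psi^{-\infty}_{\FS}}(E)\subseteq\scrJ[S]$. By Proposition~\ref{fibration-groupoid-algebra} this algebra is the algebra $\scr{K}(\ESXE)$ of compact operators on the Hilbert $\CXS$-module $\ESXE$ of fibrewise $L^2$-sections for the fibration $\scrX\to\XS$. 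Decomposing $E$ equivariantly into homogeneous line bundles $E\cong\bigoplus_i E_{\mu_i}$ splits this module as $\bigoplus_i\ESXE[\mu_i]$, so $\scr{K}(\ESXE)$ is the norm-closed linear span of the rank-one operators $\theta_{s,t}(u)=s\cdot\ipS{t,u}$ with $s\in\CXE[\mu_i]$ and $t\in\CXE[\mu_j]$, continuous sections being dense in each module. So it is enough to place each such $\theta_{s,t}$ in $\scrJ[S]$.

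The key observation is that, by the definition \eref{ES-pairing} of the $\CXS$-valued pairing, $\ipS{t,u}=p_{\trivS}(\overline{t}\,u)$, whence
$$
  \theta_{s,t}=M_s\,p_{\trivS}\,M_{\overline{t}},
$$
viewed on the relevant harmonic $\Lie{K}$-spaces as the composite of $M_{\overline{t}}\colon\LXE[\mu_j]\to L^2(\scrX)$, then $p_{\trivS}\colon L^2(\scrX)\to L^2(\scrX)$, then $M_s\colon L^2(\scrX)\to\LXE[\mu_i]$. Here $M_s$ and $M_{\overline{t}}$ are multiplication by continuous sections of homogeneous line bundles, hence lie in $\scrA[T]$ for every $T\subseteq\Sigma$ by Lemma~\ref{mult-ops-in-AS} and the paragraph following it; so $M_s,M_{\overline{t}}\in\scrA$. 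And $p_{\trivS}\in\scrJ[S]=\scrK[S]\cap\scrA$ by Corollary~\ref{projections-in-JS} --- this is the single step that invokes the standing assumption that $\Lie{K}$ is a product of special unitary groups, and it is where all the analytic substance of the paper is spent.

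To finish: $\scrA$ is closed under composition, so $\theta_{s,t}=M_s\,p_{\trivS}\,M_{\overline{t}}\in\scrA$; and since $M_s$ and $M_{\overline{t}}$ are two-sided multipliers of $\scrK[S]$ (Lemma~\ref{AS-equivalent-defns}) while $p_{\trivS}\in\scrK[S]$, successive composition gives $p_{\trivS}M_{\overline{t}}\in\scrK[S]$ and then $\theta_{s,t}\in\scrK[S]$. Hence $\theta_{s,t}\in\scrK[S]\cap\scrA=\scrJ[S]$, and passing to the norm-closed span yields $\overline{\Psi^{-\infty}_{\FS}}(E)\subseteq\scrJ[S]$, as required. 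I do not expect a serious obstacle here: the hard analytic input is imported wholesale through Corollary~\ref{projections-in-JS} (hence Proposition~\ref{asymptotic-orthogonality-true}), and the only points needing care are the density of the continuous rank-one operators in $\scr{K}(\ESXE)$ and the bookkeeping identifying this Hilbert-module algebra with the corresponding operators on the $L^2$-section spaces --- once that is in place, the factorization $\theta_{s,t}=M_s\,p_{\trivS}\,M_{\overline{t}}$ does everything.
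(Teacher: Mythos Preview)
Your proof is correct and follows essentially the same route as the paper: reduce to compact Hilbert-module operators via Proposition~\ref{fibration-groupoid-algebra}, decompose $E$ into homogeneous line bundles, and factor rank-one operators as $M_s\,p_{\trivS}\,M_{\overline{t}}$ with $p_{\trivS}\in\scrJ[S]$ by Corollary~\ref{projections-in-JS} and the multiplication operators in $\scrA$ by Lemma~\ref{mult-ops-in-AS}. Your write-up is in fact somewhat more explicit than the paper's about the density of continuous rank-one operators and the multiplier bookkeeping for $\scrK[S]$, but the argument is the same.
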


\begin{proof}
From Section \ref{longitudinal-PsiDO}, $\overline{\Psi^{-p}_{\FS}}(E)$ consists of compact Hilbert module operators on $\ESXE$.  
By decomposing $E$ into a direct sum of $\Lie{K}$ homogeneous line bundles, we are reduced to considering  a compact operator $A$ from $\ESXE[\mu]$ to $\ESXE[\nu]$ for some $\mu,\nu\in\Lambda_W$.  It suffices to work with rank-one operators,
$$
   A  = t_2.\ip{ t_1, \slot}_{\CXS}  = M_{t_2} p_\trivS M_{\overline{t_1}},
$$
for  $t_1\in\ESXE[\mu]$, $t_2\in\ESXE[\mu]$.    But since $p_\trivS\in\scrJ[S]$ (Lemma \ref{projections-in-JS}) and multiplication operators are in $\scrA$ (Lemma \ref{mult-ops-in-AS}), we are done. 
\end{proof}

 Combining this with Theorem \ref{meet}, we have proven the following.

 \begin{corollary}
 
Let $\Lie{K} = \SU(n)$ so that $\scrX$ is the complete flag variety for $\CC^n$.  Let $S_1,\ldots,S_N\subseteq\Sigma$ with $\bigcap_i S_i = \Sigma$, and let $A_i$ be a longitudinal pseudodifferential operator of negative order tangent to the fibration $\scrX \to \scrX_{S_i}$.  Then the product $\prod_i A_i$ is a compact operator on $L^2(\Lie{K})$.

\end{corollary}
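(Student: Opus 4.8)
The plan is to read the corollary off from the two principal results already established --- the inclusion $\Psi^{-p}_{\scrF_S}(E)\subseteq\scrJ[S]$ of Lemma~\ref{groupoid-algebra-in-JS} and the meet identity of Theorem~\ref{meet} --- together with the identification of $\scrK[\Sigma]$ with the compact operators on a finite-multiplicity harmonic $\Lie{K}$-space.

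First I would record that, for each $i$, the operator $A_i$ (a longitudinal pseudodifferential operator of some negative order tangent to $\scrX\to\scrX_{S_i}$) lies in the ideal $\scrJ[S_i]$ by Lemma~\ref{groupoid-algebra-in-JS}, all operators being regarded as acting on the harmonic $\Lie{K}$-space $L^2(\scrX;E)=\scr{E}_\Sigma(\scrX;E)$. Next I would extract from the proof of Theorem~\ref{meet} the slightly sharper assertion actually proved there: if $A\in\scrJ[S]$ and $B\in\scrJ[T]$ then $AB\in\scrJ[S\cup T]$. Indeed one approximates $A$ by $AP_{F_1}$ and $B$ by $P_{F_2}B$ with finite sets $F_1\subseteq\KShat$, $F_2\subseteq\KThat$ (Lemma~\ref{KS-equivalent-defns}), observes that $P_{F_1}P_{F_2}\in\scrK[S\cup T]$ by Proposition~\ref{asymptotic-orthogonality-true} (via Lemma~\ref{asymptotic-orthogonality}), and uses that $A$, $B$ and the finite-rank spectral projections all belong to $\scrA$. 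Applying this repeatedly, a routine induction on $N$ yields $\prod_{i=1}^N A_i\in\scrJ[\bigcup_i S_i]$.

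By hypothesis $\bigcup_i S_i=\Sigma$ (the full set of simple roots; the condition in the statement should read $\bigcup$, cf.\ the abstract), so $\prod_i A_i\in\scrJ[\Sigma]=\scrK[\Sigma]\cap\scrA$. Finally I would invoke the lemma that $\scrK[\Sigma]$, acting on a finite-multiplicity harmonic $\Lie{K}$-space, is exactly the ideal of compact operators; since $L^2(\scrX;E)$ has finite multiplicities by the Peter--Weyl theorem, membership in $\scrK[\Sigma]$ at once shows that $\prod_i A_i$ is compact on $L^2(\scrX;E)$, hence on $L^2(\Lie{K})$.

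I expect no genuine obstacle within this corollary itself: all of the analytic and representation-theoretic content --- above all the Gelfand--Tsetlin computation of Lemma~\ref{base-case} underpinning Proposition~\ref{asymptotic-orthogonality-true} --- has already been spent on Theorem~\ref{meet}. The only points calling for a moment's care are formal ones: phrasing the meet identity in its ``product of ideal elements lands in the union ideal'' form, so that the product stays inside the \emph{smaller} ideal $\scrJ[\Sigma]$ rather than merely in an intersection of ideals; and the observation that each $A_i\in\scrJ[S_i]$, which is precisely the assertion of Lemma~\ref{groupoid-algebra-in-JS}.
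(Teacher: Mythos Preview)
Your proposal is correct and follows exactly the paper's approach: the corollary is stated there with no explicit proof, merely as the combination of Lemma~\ref{groupoid-algebra-in-JS} with Theorem~\ref{meet}, and you have simply unpacked that combination (including the product form $\scrJ[S]\cdot\scrJ[T]\subseteq\scrJ[S\cup T]$, which is what the proof of Theorem~\ref{meet} actually establishes). Your observation that the hypothesis should read $\bigcup_i S_i=\Sigma$ rather than $\bigcap_i S_i=\Sigma$ is also correct.
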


\bibliographystyle{alpha}

\bibliography{psidos-on-flag-varieties}

\end{document}